\newtheorem{thm}{Theorem}[section]
\newtheorem{lem}{Lemma}[section]
\newtheorem{cor}{Corollary}
\journal{Unknown}
\begin{document}
\begin{frontmatter}
\title{Coherence Motivated Sampling and Convergence Analysis of Least-Squares Polynomial Chaos Regression}
\author{Jerrad Hampton}
\author{Alireza Doostan\corref{cor1}}
\ead{alireza.doostan@colorado.edu}
\cortext[cor1]{Corresponding Author: Alireza Doostan}

\address{Aerospace Engineering Sciences Department, University of Colorado, Boulder, CO 80309, USA}

\begin{abstract}
Independent sampling of orthogonal polynomial bases via Monte Carlo is of interest for uncertainty quantification of models, using Polynomial Chaos (PC) expansions. It is known that bounding the spectral radius of a random matrix consisting of PC samples, yields a bound on the number of samples necessary to identify coefficients in the PC expansion via solution to a least-squares regression problem. We present a related analysis which guarantees a mean square convergence using a \textit{coherence parameter} of the sampled PC basis that may be both analytically bounded and computationally estimated. Utilizing asymptotic results for orthogonal polynomials, we bound the coherence parameter for polynomials of Hermite and Legendre type under each respective natural sampling distribution. In both polynomial bases we identify an importance sampling distribution which yields a bound with weaker dependence on the order of the PC basis. For more general orthonormal bases, we propose the {\it coherence-optimal} sampling: a Markov Chain Monte Carlo sampling, which directly uses the basis functions under consideration to achieve a statistical optimality among all such sampling schemes with identical support, and which guarantees recovery with a number of samples that is, up to logarithmic factors, linear in the number of basis functions considered. We demonstrate these different sampling strategies numerically in both high-order and high-dimensional manufactured PC expansions.  In addition, the quality of each sampling method is compared in the identification of solutions to two differential equations, one with a high-dimensional random input and the other with a high-order PC expansion. In all observed cases the coherence-optimal sampling leads to similar or considerably improved accuracy over the other considered sampling distributions.
\end{abstract}
\begin{keyword}
Polynomial Chaos \sep Least-squares regression \sep Markov Chain Monte Carlo \sep Hermite Polynomials \sep Legendre Polynomials \sep Uncertainty Quantification
\end{keyword}
\end{frontmatter}

\section{\texorpdfstring{Introduction}{Introduction}}
\label{sec:intro}

A precise approach to analyzing complex engineering systems requires understanding how various Quantities of Interest (QoI's) depend upon system inputs that are often uncertain. An incomplete understanding may lead to poor design decisions based upon unfounded sureness or incredulity in the QoI's. Uncertainty Quantification (UQ) is an emerging field that aims at addressing these issues in an integral and rigorous manner: it intends for a meaningful characterization of uncertainties from the available information and efficient propagation of these uncertainties for a quantitative validation of model predictions,~\cite{Ghanem91a,LeMaitre10,Xiu10a,Smith13}.

Probability is a natural framework for modeling uncertainty, wherein we assume uncertain inputs are represented by a $d$-dimensional independent random vector $\bm{\Xi}:=(\Xi_1,\cdots,\Xi_d)$ with some joint probability density function $f(\bm{\xi})$. In this manner we model the scalar QoI, e.g., a functional of the solution to a physical system and here denoted by $u(\bm{\Xi})$, as an unknown function of the input, which we seek to approximate. In this work we approximate $u(\bm{\Xi})$, assumed to have a finite variance, using a Fourier-type expansion in multivariate orthogonal polynomials, each of which we denote by $\psi_k(\bm{\Xi})$, referred to as Polynomial Chaos (PC) expansion~\cite{Ghanem91a,Xiu02},
\begin{align}
\label{Eq:PCEDef}
u(\bm{\Xi}) &= \mathop{\sum}\limits_{k=0}^\infty c_k \psi_{k}(\bm{\Xi}).
\end{align}
For computation, we allow an arbitrary number of input dimensions $d$ but assume $u$ can be accurately approximated  in a truncated set of basis of total order less than or equal to $p$. To briefly explain this total order, let $\bm{k} = (k_1,\dots,k_d)$ be a $d\times 1$ multi-index such that $k_i\in{\mathbb{N}\cup\{0\}}$ represents the order of the polynomial $\psi_{k_i}(\Xi_i)$ orthonormal with respect to the probability density of $\Xi_i$. For instance, when $\Xi_i$ follows a uniform or Gaussian distribution, $\psi_{k_i}(\Xi_i)$ are (normalized) Legendre or Hermite polynomials, respectively,~\cite{Xiu02,Soize05}. The $d$-dimensional polynomials $\psi_{\bm{k}}(\bm{\Xi})$ are constructed by the tensorization of $\psi_{k_i}(\Xi_i)$,
\begin{align*}
\psi_{\bm{k}}(\bm{\Xi})=\mathop{\prod}\limits_{i=1}^d\psi_{k_i}(\Xi_i).
\end{align*}
The total order of $p$ implies that we consider all polynomials satisfying
\begin{align*}
\|\bm{k}\|_1\le p\qquad k_i\in{\mathbb{N}\cup\{0\}}\quad\forall i.
\end{align*}
A direct combinatorial count implies that a $d$-dimensional approximation of total order $p$ has $P={p+d\choose d}$ basis polynomials. This total order basis facilitates a polynomial approximation to $u$ of the form
\begin{align}
\label{Eq:PCETrunc}
u(\bm{\Xi}) &\approx \mathop{\sum}\limits_{k=1}^P c_k \psi_{k}(\bm{\Xi})
\end{align}
that favors lower order polynomials. The error introduced by this truncation is referred to as {\it truncation error}, and converges to zero -- in the mean squares sense -- when $c_k = \mathbb{E}[u(\bm\Xi) \psi_{k}(\bm\Xi)]$ and $p\  (\text{hence}\ P)\rightarrow \infty$. Here, $\mathbb{E}$ denotes the mathematical expectation operator.

To identify the PC coefficients $\bm{c}=(c_1,\cdots,c_P)^T$ in (\ref{Eq:PCETrunc}), we consider non-intrusive, i.e., sampling-based, methods where we do not require changes to deterministic solvers for $u$ as we generate realizations of $\bm{\Xi}$ to identify $u(\bm{\Xi})$. We denote these realizations $\bm{\xi}^{(i)}$ and $u(\bm{\xi}^{(i)})$, respectively. We let $i=1:N$ so that $N$ is the number of independent samples considered, and define
\begin{align}
\label{eqn:psi_u}
\bm{u}&:=(u(\bm{\xi}^{(1)}),\cdots,u(\bm{\xi}^{(N)}))^T;\\
\bm{\Psi}(i,j)&:=\psi_{j}(\bm{\xi}^{(i)}),\nonumber
\end{align}
where we refer to $\bm\Psi$ as the {\it measurement matrix}. These definitions imply the matrix equality $\bm{\Psi}\bm{c}=\bm{u}$. We also introduce a diagonal positive-definite matrix $\bm{W}$ such that $\bm{W}(i,i)$ is a function of $\bm{\xi}^{(i)}$ which depends on our sampling strategy and is described in Sections~\ref{sec:motivation} and~\ref{sec:sampling}. To approximate $\bm{c}$ we consider a least squares
 approximation. Specifically, 
%
%
%
our theoretical results focus on the solution to the parameter-free optimization problem,
\begin{align}
\label{eqn:lsqr}
\mathop{\arg\min}_{\bm{c}}\|\bm{W}\bm{u}-\bm{W}\bm{\Psi}\bm{c}\|_2,
\end{align}
whose solution $\bm c$ may be computed from the normal equation $(\bm{W\Psi})^T(\bm{W\Psi}) \bm{c}=(\bm{W\Psi})^T\bm{W}\bm{u}$. For most of this work, we assume the truncation error model
\begin{align}
\label{eqn:error}
u(\bm{\Xi})&=\mathop{\sum}\limits_{k=1}^P c_k \psi_{k}(\bm{\Xi}) + \epsilon(\bm{\Xi}),
\end{align}
We often do not express $\epsilon$'s dependence on $\bm{\Xi}$, which should not be confusing when interpreted in context. Additionally, in Section~\ref{subsec:fullErrorthms} we provide theoretical results pertaining to a more complete error model, which is capable of handling interpretations of model or measurement errors.

The least squares regression problem (\ref{eqn:lsqr}) is among major techniques used to generate PC expansions for uncertainty quantification of engineering systems, see, e.g., \cite{Hosder06,Berveiller06,LeMaitre10,CDL13,Migliorati13,Chkifa13,Jones13a,Jones13b,Migliorati14,Gao14,Zhou14,Tang14}. There has been a growing recent interest in understanding the conditions under which problem (\ref{eqn:lsqr}) leads to accurate and stable PC approximation, primarily when the PC basis is of Legendre type and the QoI is sampled according to uniform measure. In particular,~\cite{Migliorati14} presents a bound on the approximation error (in the mean squares sense) for the case of univariate Legendre expansions, provided that the number of samples grows quadratically in the size of the basis. The numeral tests in \cite{Migliorati13} confirm the need for a quadratic growth of sample size, but reveal that a linear growth may be sufficient for accurate Legendre expansions in multiple dimensions when samples are drawn according to the uniform measure. Bounds on the expected mean squares error were derived in \cite{CDL13,Chkifa13} for bounded orthogonal polynomials when sampled according to their natural orthogonality measure. The work in \cite{Zhou14} proposes a deterministic sampling strategy that leads to stable approximations in the Chebyshev basis with a number of samples that scales quadratically in the dimension of the polynomial space. The sampling is also extended to other polynomial bases via a weighted least squares
 formulation. Last but not least, we mention the recent work in \cite{Tang14} that demonstrates empirically the instability of high order Hermite polynomial expansions and proposes an alternative, stable approximation in the basis of Hermite functions. The QoI (in the univariate case) is evaluated at sample points obtained from a nonlinear mapping of random samples uniformly distributed over [-1,1] to the real line. Under this sampling -- which involves parameters to tune -- it is shown that the Hermite function expansion is stable when the number of samples that depends linearly on the number of basis functions.

\subsection{\texorpdfstring{Contributions of This Work}{Contributions of This Work}}
\label{subsec:Contribution}

This work is concerned with the convergence analysis of least squares PC regressions with a particular focus on the role of sampling measure according to which the QoI is evaluated. The novelty of this work arises from utilizing an analysis of Legendre and Hermite polynomials together with the main parameter from~\cite{CDL13} to identify sampling methods for stable and accurate solution recovery via (\ref{eqn:lsqr}), as well as direct practical bounds on the number of samples needed to guarantee recovery. As a result, we present concrete theoretical and practical justifications for when a linear (or quadratic) bound on the number of samples is applicable, as well as a sampling that universally guarantees a linear bound, up to a logarithmic factor. In particular, to our best knowledge, the latter sampling results are the first of their type. Additionally, these analyses highlight the role of the order $p$ and dimension $d$ of the PC expansion in the choice of sampling strategy and quality of solution recovery.

In details, we present results demonstrating the stability of least squares
 solution for {\it low} order, but possibly {\it high} dimensional, Legendre and Hermite expansions when sampled according to their standard orthogonality measure, i.e., uniform and standard Gaussian, respectively. Such a stability is guaranteed for a number of samples that scales linearly on the size of the basis set.   

Additionally, we provide a contribution of particular practical interest in that we also consider sampling {\it low} dimensional, but possibly {\it high} order, Hermite polynomials uniformly over a $d$-dimensional ball -- with a radius depending on the order of approximation -- instead of sampling from the standard Gaussian measure. This sampling of Hermite polynomials is analogous to Hermite function expansion, as in \cite{Tang14}, when the QoI is appropriately weighted. The uniform sampling of the present work is substantially different from the sampling strategy proposed in \cite{Tang14}. We provide analytic and numeric results justifying the use of this {\it importance sampling} distribution for the recovery of Hermite PC expansions. In a similar context, sampling of Legendre polynomials according to Chebyshev measure will be discussed. 

Finally, we analytically identify an importance sampling distribution with a statistical {\it optimality}, in terms of the {\it spectral radius} of the realized matrix $\bm{\Psi}$ as a key recovery parameter of the method, and identify a Monte Carlo Markov Chain sampler for which we provide associated numeric results. This approach, here referred to as {\it coherence-optimal} sampling, provides a general sampling scheme for the reconstruction of expansions in general bases. This sampling scheme allows us to guarantee recovery with a number of samples that scales linearly in the number of basis functions, up to a logarithmic factor, and may be extended to PC types other than Legendre and Hermite. We highlight that such a linear scaling of the number of samples makes the least squares
 regression approach a competitive framework for UQ of systems where the QoI is not cheap to evaluate.

The presentation in this work has Section~\ref{sec:motivation} providing necessary background information and motivating our approach, while Section~\ref{sec:sampling} describes our sampling methods and provides key theoretical results. Section~\ref{sec:examples} demonstrates the performance of the sampling methods, and Section~\ref{sec:Proofs} presents the proofs to the Theorems from Section~\ref{sec:sampling}.
\section{\texorpdfstring{Review, Definitions and Recovery Results}{Review, Definitions and Recovery Results}}
\label{sec:motivation}
To place the results presented in Section~\ref{sec:cdl_theorems} in context, we provide a summary of recent results on least squares
 PC regression, while introducing two coherence definitions that are fundamental to our theoretical results, and in constructing our sampling methods.
\subsection{\texorpdfstring{Sampling Definitions}{Sampling Definitions}}
\label{subsubsec:sampling}
We consider as a basis the set of orthonormal polynomials, $\{\psi_k(\bm{\xi})\}_{k=1}^{P}$, and define $B(\bm{\xi})$ to be
\begin{align}
\label{eqn:btspec}
B(\bm{\xi}):=\sqrt{\mathop{\sum}\limits_{k=1}^{P}|\psi_k(\bm{\xi})|^2}.
\end{align}
This represents a uniformly least upper bound on the sum of the squares of the basis polynomials of interest, and is motivated by work in~\cite{CDL13}, which uses this function to generate a condition-type number useful for guaranteeing recovery. A similar approach is taken in~\cite{CandesPlan,Hampton14} for the case of solutions recovered via $\ell_1$-minimization. A bound on $B(\bm{\xi})$ may be attained from
\begin{align}
\label{eqn:btbound}
B^2(\bm{\xi})\le P \mathop{\sup}\limits_{k=1:P}|\psi_k(\bm{\xi})|^2,
\end{align}
where $\mathop{\sup}\limits_{k=1:P}|\psi_k(\bm{\xi})|^2$ may be bounded for several orthogonal polynomials~\cite{RauhutWard,Szego,Hermite1,AskeyWainger,LagAsym,Jacobi,Hampton14}. In addition, to ease analysis of $B(\bm{\xi})$, formed of piecewise continuous polynomials, we consider, possibly smooth, functions $G(\bm{\xi})$
\begin{align}
\label{eqn:GDef}
G(\bm{\xi})\ge B(\bm{\xi}) \qquad \forall\bm{\xi}\in\Omega,
\end{align}
representing an $L_2(\Omega,f)$-integrable upper bound on $B(\bm{\xi})$, for all $\bm{\xi}\in\Omega$, where $\Omega$ denotes the set of possible realizations of $\bm{\Xi}$.

In this case, we have that $\psi_k(\bm{\xi})/G(\bm{\xi})\le 1$. It follows that for any set $\mathcal{S}\subseteq\Omega$,
\begin{align}
\label{eqn:optimal_pdf_gen}
f_{\bm{Y}}(\bm{\xi}):=c^2f(\bm{\xi})G^2(\bm{\xi}),
\end{align}
with
\begin{align}
\label{eqn:normalizingconstant}
c = \left(\int_{\mathcal{S}}f(\bm{\xi})G^2(\bm{\xi})d\bm{\xi}\right)^{-1/2},
\end{align}
is a probability distribution supported on $\mathcal{S}$, which we consider as the distribution for $\bm{Y}$, a copy of $\bm\Xi$ \cite{Hampton14}. Let $\delta_{i,j}$ denote the Kronecker delta such that $\delta_{i,j}=1$ if $i=j$ and $0$ if $i\ne j$. Note that for $i,j=1:P$,
\begin{align}
\label{Eqn:TransformIntegral}
\left|\int_{\mathcal{S}}\frac{\psi_i(\bm{\xi})}{cG(\bm{\xi})}\frac{\psi_j(\bm{\xi})}{cG(\bm{\xi})}c^2f(\bm{\xi})G^2(\bm{\xi})d\bm{\xi}-\delta_{i,j}\right|&\le \epsilon_{i,j},
\end{align}
and we may select $\mathcal{S}$ such that $\epsilon_{i,j}$ may be made as small as needed, e.g., if we take $\mathcal{S} = \Omega$, then $\epsilon_{i,j}=0$ due to the orthonormality of $\{\psi_k(\bm\Xi)\}_{k=1}^{P}$. For this purpose we employ the heuristic of selecting $\mathcal{S}$ to encompass the largest values of $f(\bm{\xi})$ until $\mathcal{S}$ is large enough to satisfy the condition (\ref{Eqn:CoherenceUnbounded}), discussed in Section~\ref{subsubsec:Coherence}. The justification for this is that in unbounded domains, e.g., for Hermite polynomials, regions of small $f(\bm{\xi})$ typically correspond to larger $B(\bm{\xi})$ as $p$ grows~\cite{Szego,AskeyWainger,Hermite1}.

Notice that $\{\psi_k(\bm Y)\}_{k=1}^{P}$ is no longer (approximately) orthonormal under $f_{\bm Y}$. Following (\ref{Eqn:TransformIntegral}), we therefore consider the weight function 
\begin{align}
\label{eqn:weightFunction}
w(\bm{Y})&:=\frac{1}{cG(\bm{Y})},
\end{align}
so that $\{w(\bm{Y})\psi_k(\bm{Y})\}_{k=1}^P$ are (approximately) orthonormal random variables. This function defines the diagonal positive-definite matrix $\bm{W}$ from (\ref{eqn:lsqr}) as
\begin{align*}
 \bm{W}(i,i)= w(\bm{\xi}^{(i)}),
\end{align*}
where $\bm{\xi}^{(i)}$ is the $i$th realization of $\bm{Y}$. For the interest of notation, we refer to all realized random vectors by $\bm{\xi}$ regardless of the sampling distribution for $\bm{\xi}$, noting that the weight function $w$ depends on that distribution. Additionally, we note that for simulation, we are typically not interested in the normalizing constant $c$ associated with the sampling distribution.

\subsection{\texorpdfstring{Coherence Parameter Definition}{Coherence-Parameter Definition}}
\label{subsubsec:Coherence}
Consider realizations of $w(\bm{Y})\psi_k(\bm{Y})$ for $k=1:P$. We investigate the {\it coherence} parameter 
\begin{align}
\label{Eqn:CoherenceBounded}
\mu_{2}(\bm{Y}) &:= \mathop{\sup}\limits_{\bm{\xi}\in\Omega}\mathop{\sum}_{k=1}^{P}|w(\bm{\xi})\psi_k(\bm{\xi})|^2,
\end{align}
which, following~\cite{CDL13}, yields a bound on the realized spectral radius of $\bm{W\Psi}$ and useful results pertaining to the solution of (\ref{eqn:lsqr}). We often suppress the dependence of $\mu_2$ on the sampling scheme, as there should be no confusion that it depends on how the random variables are sampled. From (\ref{eqn:weightFunction}) we are motivated to consider $G(\bm{\xi})$ taken to be $B(\bm{\xi})$ as defined in (\ref{eqn:btspec}), and as we shall
show in Theorem~\ref{thm:LowPTransformedSamples} that this sampling method leads to $\mu_2=P$ and a linear dependence on the number of samples needed as compared to the number of basis functions considered up to logarithmic factors. Fortunately, asymptotic (in $p$) results on the orthogonal polynomials considered here give choices for $G(\bm{\xi})$ that, while generally lead to sub-optimal $\mu_2$, correspond to sampling from known measures $f_{\bm Y}$, as discussed in Section \ref{subsec:asym}.

We utilize the definition in (\ref{Eqn:CoherenceBounded}) when analyzing Legendre polynomials which are bounded on the domain $[-1,1]^d$. However, we note that (\ref{Eqn:CoherenceBounded}) is not useful when $\sup_{k=1:P,\bm{\xi}\in\Omega}|w(\bm{\xi})\psi_k(\bm{\xi})|^2$ is infinite, such as when $\psi_k(\bm{\xi})$ are Hermite polynomials and $w(\bm{\xi}) = 1$. If $N$ is the number of samples of $\bm{Y}$ which we will take, motivated by the truncation in \cite{CandesPlan,Hampton14} for recovery via $\ell_1$-minimization, we consider a truncation of $\Omega$ to a subset $\mathcal{S}$ and let
\begin{align}
\label{Eqn:CoherenceEll2Unbounded}
\mu_{2}(\bm{Y}) &:= \mathop{\min}\limits_{\mathcal{S}}\left\{\mathop{\sup}\limits_{\bm{\xi}\in\mathcal{S}}\mathop{\sum}\limits_{k=1}^P|w(\bm{\xi})\psi_k(\bm{\xi})|^2,\cdots\right.\\
\nonumber
\mbox{subject to }&\left.\mathbb{P}(\mathcal{S}^c)\le\frac{1}{NP};\ \mathop{\sum}\limits_{k=1}^{P}\mathbb{E}\left(|w(\bm{Y})\psi_k(\bm{Y})|^2\bm{1}_{\mathcal{S}^{c}}\right)\le\frac{1}{20}P^{-1/2}\right\},
\end{align}
where $\mathcal{S}$ is a subset of the support of $f$, the superscript $c$ denotes a set complement, and $\bm{1}$ is an indicator function. If $\mathcal{S}$ is a finite set, then $\mu_{2}(\bm{Y})$ is finite for polynomial basis functions, as we consider here. \\[-.3cm]

\noindent{\bf Remark.} We note that the conditions in (\ref{Eqn:CoherenceEll2Unbounded}) can be tweaked, with tradeoffs concerning the probability of failure of (\ref{eqn:lsqr}) in finding a solution $\bm c$ and a bias in spectral norm of $(\bm{W\Psi})^T(\bm{W\Psi})$ as discussed in Section~\ref{sec:Proofs}. Also, for the sampling described in Section~\ref{subsec:MCMC}, we can forego this truncation. 
\
We also consider the same truncation to  $\mathcal{S}$ of (\ref{Eqn:CoherenceEll2Unbounded}) in regards to the coherence parameter $\mu_{\infty}(\bm{Y})$ defined by
\begin{align}
\label{Eqn:CoherenceUnbounded}
\mu_{\infty}(\bm{Y}) &:= \mathop{\sup}\limits_{\bm{\xi}\in\mathcal{S},k=1:P}|w(\bm{\xi})\psi_k(\bm{\xi})|^2.
\end{align}
This parameter is often easier to directly bound than the sum associated with $\mu_2$. Notice that $\mu_\infty$ may be used to bound $\mu_2$ via
\begin{align}
\label{Eqn:CohBound}
\mu_2(\bm{Y})\le P \mu_{\infty}(\bm{Y}).
\end{align}
This bound may be sharp, for example when $|w(\bm{\xi})\psi_k(\bm{\xi})|^2=1$ uniformly in $\bm{\xi}$ and $k$, as occurs when using basis functions restricted to have complex modulus one for $\bm{\xi}\in\mathcal{S}$. Additionally, one has that $\mu_2\ge \mu_{\infty}$, which may also be sharp if, for instance, the basis functions have non-overlapping support. However, these cases do not apply to the polynomial bases considered here. In fact, as has been investigated in~\cite{Hampton14}, and will be reproduced in Section~\ref{sec:sampling} in regards to polynomial bases, with an appropriate sampling distribution $\mu_{\infty}$ is most heavily influenced by $p$, the order of PC approximation, or $d$, the dimension of approximation. 

As seen in the next section, following \cite{CDL13}, solution recovery from ({\ref{eqn:lsqr}) can be guaranteed with a number of samples that is proportional to $\mu_2$ up to logarithmic factors in $P$. This is bounded by a number of samples that is proportional to $\mu_{\infty}P\log(P)$. The sampling distribution $f_{\bm Y}$ will imply the coherence $\mu_{\infty}$ and its dependence on $d$ and $p$. In particular, in the case of Legendre polynomials, $\mu_{\infty}$ grows exponentially in $p$ under standard sampling but is bounded in $d$. Similarly, $\mu_{\infty}$ grows exponentially in $d$ under Chebyshev sampling, but is bounded in $p$. See for example the results of~\cite{Hampton14}, whose relevant theorems for bounds on $\mu_{\infty}$ are presented in Section~\ref{sec:sampling}. We note that this analysis can be directly used to explain observations regarding solution recovery from ({\ref{eqn:lsqr}) using a number of samples that depends either linearly or quadratically on the number of basis functions $P$, see, e.g., \cite{Chkifa13,Migliorati14,Gao14,Tang14}.
\subsection{\texorpdfstring{Convergence Theorems}{Convergence Theorems}}
\label{sec:cdl_theorems}
The following theorems use the coherence parameter in either (\ref{Eqn:CoherenceBounded}) or (\ref{Eqn:CoherenceEll2Unbounded}) to bound the number of samples necessary to recover the PC coefficients from ({\ref{eqn:lsqr}) with high probability. Our first theorem concerning solution recovery is comparable to Theorem 2 in~\cite{CDL13}, and demonstrates a quality mean squared convergence with high probability. 
\begin{thm}
\label{thm:SampleDepth}
Let 
\begin{align*}
\hat{u}(\bm\Xi)&=\mathop{\sum}\limits_{k=1}^P\hat{c}_k\psi_k(\bm{\Xi}),
\end{align*}
where $\hat{\bm{c}}=(\hat{c}_1,\cdots,\hat{c}_P)^T$ is the solution to (\ref{eqn:lsqr}).
It follows that for $\mathcal{E}$ a sampling event (defined in Section~\ref{sec:Proofs}) that occurs with probability at least $1-1/P-2P\exp(-0.1N\mu_2^{-1})$ (or $1-2P\exp(-0.1N\mu_{2}^{-1})$ when the definition in (\ref{Eqn:CoherenceBounded}) is used),
\begin{equation*}
\label{eqn:mse_bound_1}
\mathbb{E}\left(\Vert u-\hat{u}\Vert^2_{L_2(\Omega,f)}; \mathcal{E}\right)\le \mathbb{E}(\epsilon^2)\left(1+\frac{4\mu_{2}}{N}\right),
\end{equation*}
where 
\begin{align}
\label{eqn:restricted_expectation}
\mathbb{E}(X;\mathcal{E}) = \int_{\mathcal{E}}X(\bm{\xi})f(\bm{\xi})d\bm{\xi} =\mathbb{E}(X|\mathcal{E})\mathbb{P}(\mathcal{E})
\end{align}
denotes the expectation restricted to the event (also known as restricted expectation), and is closely related to conditional expectation.
\end{thm}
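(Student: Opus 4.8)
The plan is to control $\mathbb{E}(\|u-\hat u\|^2_{L_2(\Omega,f)};\mathcal{E})$ by splitting it into the truncation contribution and the regression-error contribution, and to engineer the event $\mathcal{E}$ so that on $\mathcal{E}$ the Gram matrix $\bm{G}:=\frac{1}{N}(\bm{W\Psi})^T(\bm{W\Psi})$ is well-conditioned. First I would write $u=\sum_{k=1}^P c_k\psi_k+\epsilon$ as in~(\ref{eqn:error}), so that by orthonormality $\|u-\hat u\|^2_{L_2(\Omega,f)}=\|\bm c-\hat{\bm c}\|_2^2+\mathbb{E}(\epsilon^2)$ (the cross term vanishes since $\mathbb{E}[\epsilon\psi_k]=0$ when $c_k=\mathbb{E}[u\psi_k]$; I would note this holds exactly when $\mathcal S=\Omega$ and up to the $\epsilon_{i,j}$ slack otherwise, which is absorbed into the failure probability $1/P$). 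Thus it suffices to bound $\mathbb{E}(\|\bm c-\hat{\bm c}\|_2^2;\mathcal{E})$ by $4\mu_2\mathbb{E}(\epsilon^2)/N$.

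Next I would define the sampling event $\mathcal{E}$ precisely. Let $\bm{\xi}^{(i)}$ be i.i.d.\ from $f_{\bm Y}$ (restricted to $\mathcal S$ when the truncated coherence~(\ref{Eqn:CoherenceEll2Unbounded}) is in force). Write each row of $\bm{W\Psi}$ as $\bm\psi^{(i)}:=(w(\bm\xi^{(i)})\psi_1(\bm\xi^{(i)}),\dots,w(\bm\xi^{(i)})\psi_P(\bm\xi^{(i)}))$, so that $\bm{G}=\frac1N\sum_{i=1}^N (\bm\psi^{(i)})^T\bm\psi^{(i)}$ with $\mathbb{E}\bm{G}=\bm I$ (exactly, or up to $\epsilon_{i,j}$). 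By~(\ref{Eqn:CoherenceBounded}) or~(\ref{Eqn:CoherenceEll2Unbounded}) each summand has operator norm at most $\mu_2$ on $\mathcal S$. Take $\mathcal{E}$ to be the event $\{\|\bm G-\bm I\|_2\le 1/2\}$ (intersected, in the unbounded case, with the event that all samples land in $\mathcal S$). A matrix Chernoff / Bernstein bound (e.g.\ Tropp) then gives $\mathbb{P}(\mathcal{E}^c)\le 2P\exp(-cN\mu_2^{-1})$ with $c$ of order $0.1$; combined with the $1/(NP)$-probability that some sample escapes $\mathcal S$ (hence the extra $1/P$ term, using $N\ge1$) this reproduces the stated probability. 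On $\mathcal{E}$, the normal equations give $\hat{\bm c}-\bm c=\bm G^{-1}\cdot\frac1N(\bm{W\Psi})^T\bm W\bm\epsilon$, where $\bm\epsilon=(\epsilon(\bm\xi^{(1)}),\dots,\epsilon(\bm\xi^{(N)}))^T$, using that $\bm{\Psi c}$ fits the noiseless part exactly in the span.

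Then I would bound $\|\bm c-\hat{\bm c}\|_2^2\le \|\bm G^{-1}\|_2^2\,\|\frac1N(\bm{W\Psi})^T\bm W\bm\epsilon\|_2^2\le 4\,\|\bm r\|_2^2$ on $\mathcal{E}$, where $\bm r:=\frac1N(\bm{W\Psi})^T\bm W\bm\epsilon$ has components $r_k=\frac1N\sum_i w(\bm\xi^{(i)})^2\psi_k(\bm\xi^{(i)})\epsilon(\bm\xi^{(i)})$. The key observation is that the conditioning on $\mathcal{E}$ couples $\bm r$ with $\bm G$, so I cannot simply take an unconditional expectation of $\|\bm r\|_2^2$; instead I would use $\mathbb{E}(\|\bm c-\hat{\bm c}\|_2^2;\mathcal{E})\le 4\,\mathbb{E}(\|\bm r\|_2^2;\mathcal{E})\le 4\,\mathbb{E}\|\bm r\|_2^2$, discarding the (beneficial) indicator to decouple. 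A direct second-moment computation gives $\mathbb{E}\|\bm r\|_2^2=\frac1N\sum_{k=1}^P\mathbb{E}\big(w^2(\bm Y)\psi_k^2(\bm Y)\,w^2(\bm Y)\epsilon^2(\bm Y)\big)$ after using independence of the samples and $\mathbb{E}[w^2\psi_k\epsilon]$-type cross terms vanishing; bounding the factor $\sum_k w^2\psi_k^2\le \mu_2$ pointwise (this is exactly the coherence parameter) leaves $\mathbb{E}\|\bm r\|_2^2\le \frac{\mu_2}{N}\,\mathbb{E}(w^2(\bm Y)\epsilon^2(\bm Y))=\frac{\mu_2}{N}\mathbb{E}(\epsilon^2)$, where the last equality is the change of measure $f_{\bm Y}=c^2 f G^2$, $w^2=1/(c^2G^2)$. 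Assembling the pieces yields $\mathbb{E}(\|u-\hat u\|^2;\mathcal{E})\le \mathbb{E}(\epsilon^2)(1+4\mu_2/N)$.

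The main obstacle is the handling of the conditioning on $\mathcal{E}$: the noise vector $\bm\epsilon$, the matrix $\bm G$, and the indicator $\bm 1_{\mathcal{E}}$ all depend on the same samples, so decoupling must be done carefully — the trick of bounding $\|\bm G^{-1}\|_2\le2$ pointwise on $\mathcal{E}$ and then dropping $\bm 1_{\mathcal E}$ upward is what makes the second-moment computation clean, but one must check that this does not lose the factor structure needed to pull out $\mu_2$. A secondary technical point is the truncation bookkeeping in the unbounded (Hermite) case: the constraints in~(\ref{Eqn:CoherenceEll2Unbounded}) are precisely calibrated so that the bias $\|\mathbb{E}\bm G-\bm I\|_2$ and the tail $\sum_k\mathbb{E}(w^2\psi_k^2\bm 1_{\mathcal S^c})$ are small enough not to degrade the constants; I would verify that these thresholds ($1/(NP)$ and $\tfrac1{20}P^{-1/2}$) feed through the matrix concentration bound and the second-moment bound with room to spare, deferring the exact constant-chasing to Section~\ref{sec:Proofs}.
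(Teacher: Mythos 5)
Your proposal follows essentially the same route as the paper's proof: the same event $\mathcal{E}$ (all samples in $\mathcal{S}$ intersected with $\|\bm{M}-\bm{I}\|\le 1/2$ via a matrix Chernoff bound), the same decomposition of the error into the truncation variance plus $\|\bm c-\hat{\bm c}\|_2^2$, the same use of the normal equations with $\|\bm{M}^{-1}\|\le 2$ on $\mathcal{E}$, and the same second-moment computation on the residual vector in which cross terms vanish by independence and orthogonality and the pointwise coherence bound produces the $4\mu_2/N$ factor. The only cosmetic difference is that the paper relaxes the restriction from $\mathcal{E}$ to the event that all samples lie in $\mathcal{S}$ (rather than dropping the indicator entirely), which is what licenses taking the supremum over $\mathcal{S}$ in the unbounded case; your parenthetical restriction of the sampling to $\mathcal{S}$ accomplishes the same thing.
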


\noindent{\bf Remark.} We note that expectations in this theorem -- as well as in the subsequent theorems -- are with respect to the original measure $f(\bm\xi)$ of $\bm\xi$, even when we sample according to $f_{\bm{Y}}$, see the discussion in Section \ref{subsec:proofEll2TruncOnly}. \\[-.3cm]

The Theorem \ref{thm:SampleDepth} and its dependence on $\mu_{2}$ motivate us to sample such that $\mu_2$ is small. Indeed, if $\mu_{2}$ scales linearly in $P$, as for example under the sampling of Section~\ref{subsec:MCMC} or when $\mu_\infty$ is bounded by a constant, then we can take $N$ to scale nearly linearly with $P$ in a way that depends on the desired accuracy. Noting that $\Vert u-\hat{u}\Vert^2_{L_2(\Omega,f)}-\mathbb{E}(\epsilon^2)\ge 0$ and applying the Markov inequality, gives a corollary that is also useful in bounding a failure probability for a particular least-squares computation.
\begin{cor}
\label{cor:MarkovBound}%
For any $\tau>0$,
\begin{align*} 
\mathbb{P}\left(\Vert u-\hat{u}\Vert^2_{L_2(\Omega,f)}\ge \mathbb{E}(\epsilon^2) +\tau \right)\le \frac{1}{P}+2P\exp\left(-\frac{0.1N}{\mu_2}\right)+\frac{4\mu_2}{N}\cdot\frac{\mathbb{E}(\epsilon^2)}{\tau}.
\end{align*}
Further, when the definition in (\ref{Eqn:CoherenceBounded}) is used for $\mu_2$, this bound holds without the $1/P$ term.
\end{cor}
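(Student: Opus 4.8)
The plan is to deduce this from Theorem~\ref{thm:SampleDepth} by combining a union bound over the sampling event $\mathcal{E}$ with Markov's inequality applied to a truncated nonnegative random variable. All randomness below is with respect to the $N$ samples that determine $\hat{\bm c}$ (hence $\hat u$), whereas $\mathbb{E}(\epsilon^2)$ is a fixed number.

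First I would split
\[
\mathbb{P}\!\left(\Vert u-\hat u\Vert^2_{L_2(\Omega,f)}\ge \mathbb{E}(\epsilon^2)+\tau\right)
\le \mathbb{P}(\mathcal{E}^c)
+\mathbb{P}\!\left(\left\{\Vert u-\hat u\Vert^2_{L_2(\Omega,f)}\ge \mathbb{E}(\epsilon^2)+\tau\right\}\cap\mathcal{E}\right),
\]
and bound $\mathbb{P}(\mathcal{E}^c)\le 1/P+2P\exp(-0.1N\mu_2^{-1})$ using the probability estimate for $\mathcal{E}$ in Theorem~\ref{thm:SampleDepth} (the $1/P$ term being absent precisely when definition (\ref{Eqn:CoherenceBounded}) is in force, which accounts for the last sentence of the statement). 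For the second term, the point is that $Z:=\big(\Vert u-\hat u\Vert^2_{L_2(\Omega,f)}-\mathbb{E}(\epsilon^2)\big)\mathbf{1}_{\mathcal{E}}$ is nonnegative: since $c_k=\mathbb{E}[u\psi_k]$, the truncation residual $\epsilon$ is $L_2(\Omega,f)$-orthogonal to $\mathrm{span}\{\psi_k\}_{k=1}^P$, so for every realization of the samples the Pythagorean identity gives $\Vert u-\hat u\Vert^2_{L_2(\Omega,f)}=\sum_{k=1}^P(c_k-\hat c_k)^2+\mathbb{E}(\epsilon^2)\ge \mathbb{E}(\epsilon^2)$, which is exactly the inequality noted just before the statement. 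Since $\tau>0$, the event in the second term is contained in $\{Z\ge\tau\}$, so Markov's inequality gives $\mathbb{P}\!\left(\{\,\cdots\,\}\cap\mathcal{E}\right)\le \mathbb{E}(Z)/\tau$.

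It then remains to show $\mathbb{E}(Z)\le \tfrac{4\mu_2}{N}\mathbb{E}(\epsilon^2)$. Writing $\mathbb{E}(Z)=\mathbb{E}\big(\Vert u-\hat u\Vert^2_{L_2(\Omega,f)};\mathcal{E}\big)-\mathbb{E}(\epsilon^2)\,\mathbb{P}(\mathcal{E})$ with the restricted-expectation notation of (\ref{eqn:restricted_expectation}), this is the excess-error form of the estimate established inside the proof of Theorem~\ref{thm:SampleDepth}; the displayed version there, $\mathbb{E}(\Vert u-\hat u\Vert^2;\mathcal{E})\le\mathbb{E}(\epsilon^2)(1+4\mu_2/N)$, is recovered from it by adding back $\mathbb{E}(\epsilon^2)\mathbb{P}(\mathcal{E})\le\mathbb{E}(\epsilon^2)$. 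Substituting this bound for $\mathbb{E}(Z)$ and collecting terms yields the claimed inequality. The argument is otherwise routine; the only thing to watch is the event bookkeeping — Markov must be applied to $Z$, which is nonnegative only after truncating to $\mathcal{E}$ and only because $\epsilon$ is orthogonal to the retained basis, and the mass of $\mathcal{E}^c$ has to be carried separately through the union bound rather than folded into the Markov estimate.
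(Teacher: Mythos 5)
Your proof is correct and follows the same route the paper sketches in the sentence preceding the corollary: nonnegativity of $\Vert u-\hat u\Vert^2_{L_2(\Omega,f)}-\mathbb{E}(\epsilon^2)$ via orthogonality of $\epsilon$ to the retained basis, a union bound over $\mathcal{E}^c$, and Markov's inequality applied to the restricted expectation. Your observation that one must invoke the excess-error estimate $\mathbb{E}(\|\bm{c}-\hat{\bm{c}}\|_2^2;\mathcal{E})\le 4\mu_2\mathbb{E}(\epsilon^2)/N$ from inside the proof of Theorem~\ref{thm:SampleDepth}, rather than the stated bound of that theorem (which would leave a spurious $\mathbb{E}(\epsilon^2)\mathbb{P}(\mathcal{E}^c)/\tau$ term), is exactly the right bookkeeping.
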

Additionally, the following theorem useful for bounding needed sample sizes follows directly from Theorem~\ref{thm:SampleDepth},
\begin{thm}
\label{thm:numSamples}
For any $\tau>0$, we seek to identify the number of samples, $N_\tau$, so that with the same definitions as in Theorem~\ref{thm:SampleDepth}, it follows that
\begin{align*}
\mathbb{E}\left(\Vert u-\hat{u}\Vert^2_{L_2(\Omega,f)}; \mathcal{E}\right)\le \mathbb{E}(\epsilon^2)+\tau.
\end{align*}
where $\mathcal{E}$ occurs with probability larger than $1-1/P-2P\exp(-0.1N\mu_{2}^{-1})$ (or $ 1-2P\exp(-0.1N\mu_{2}^{-1})$ if the definition in (\ref{Eqn:CoherenceBounded}) is used). Stated another way, we wish to bound the number of samples so that we are within $\tau$ of the theoretically minimal mean squared error with high probability. This is achieved by allowing the number of samples to satisfy
\begin{align*}
\tau N_\tau &\ge 4\mathbb{E}(\epsilon^2)\mu_{2}.
\end{align*}
\end{thm}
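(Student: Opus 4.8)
The plan is to read the conclusion directly off Theorem~\ref{thm:SampleDepth}. That theorem already supplies, on the sampling event $\mathcal{E}$ (which carries the stated probability, with or without the $1/P$ term according to whether (\ref{Eqn:CoherenceBounded}) or (\ref{Eqn:CoherenceEll2Unbounded}) is used for $\mu_2$), the bound
\[
\mathbb{E}\left(\Vert u-\hat{u}\Vert^2_{L_2(\Omega,f)}; \mathcal{E}\right)\le \mathbb{E}(\epsilon^2)\left(1+\frac{4\mu_{2}}{N}\right)
= \mathbb{E}(\epsilon^2) + \frac{4\,\mathbb{E}(\epsilon^2)\,\mu_2}{N}.
\]
Thus the entire content of Theorem~\ref{thm:numSamples} is to choose $N=N_\tau$ so that the excess term $4\mathbb{E}(\epsilon^2)\mu_2/N$ is at most $\tau$.

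First I would isolate that excess term and impose $4\mathbb{E}(\epsilon^2)\mu_2/N_\tau \le \tau$. Since $\tau>0$ and $N_\tau>0$, this rearranges to the equivalent condition $\tau N_\tau \ge 4\mathbb{E}(\epsilon^2)\mu_2$, which is precisely the stated hypothesis. Substituting back into the displayed inequality yields $\mathbb{E}\left(\Vert u-\hat{u}\Vert^2_{L_2(\Omega,f)}; \mathcal{E}\right)\le \mathbb{E}(\epsilon^2)+\tau$, and the probability lower bound for $\mathcal{E}$ is inherited verbatim from Theorem~\ref{thm:SampleDepth}. This completes the argument.

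There is essentially no obstacle here: the result is an immediate reformulation of Theorem~\ref{thm:SampleDepth}, repackaging its $N$-dependence as a clean ``how many samples suffice'' statement. The only point worth flagging is interpretive rather than technical: $\mathbb{E}(\epsilon^2)$ is generally unknown in practice, so the condition $\tau N_\tau \ge 4\mathbb{E}(\epsilon^2)\mu_2$ is meant qualitatively, asserting that the sample budget scales (up to the logarithmic factors hidden in the probability term) linearly in $\mu_2$. One might additionally remark that, since $\mu_2\le P\mu_\infty$ by (\ref{Eqn:CohBound}), the same $N_\tau$ admits the cruder but more explicit sufficient condition $\tau N_\tau \ge 4\mathbb{E}(\epsilon^2) P \mu_\infty$; all remaining steps are the routine algebra indicated above.
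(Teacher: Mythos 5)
Your proposal is correct and matches the paper's intent exactly: the paper gives no separate proof, stating only that the result ``follows directly from Theorem~\ref{thm:SampleDepth}'', and your rearrangement of the excess term $4\mathbb{E}(\epsilon^2)\mu_2/N \le \tau$ into $\tau N_\tau \ge 4\mathbb{E}(\epsilon^2)\mu_2$ is precisely that direct derivation. Nothing is missing.
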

We see that if $\mu_{2}$ from (\ref{Eqn:CoherenceEll2Unbounded}) is such that $\mu_2=\nu P$ for some constant $1\le\nu \le \mu_\infty$ depending on the measure of $\bm Y$, then $N_\tau$ scales linearly in $P$, the number of basis functions considered. This is the case for example, when we sample as described in Section~\ref{subsec:MCMC}, or when $\mu_\infty$ is bounded by a constant. If we make this assumption that $\mu_2=\nu P$, then from the result of Theorem \ref{thm:numSamples}, the failure probability $1-\varrho$ is bounded by 
\begin{align*} 
 1-\varrho \le \exp\left(\log(2P)-\frac{0.1N}{\nu P}\right).
\end{align*}
An algebraic rearrangement of terms gives that
\begin{align*} 
N\le 10\nu P\log\left(\frac{2P}{1-\varrho}\right).
\end{align*}
This provides an explicit bound on the number of samples needed to guarantee an accurate recovery within a certain failure probability. We summarize this implication and that of error control suggested by Theorem~\ref{thm:numSamples} in the following corollary. 
\begin{cor}
\label{cor:SamplesExplicit} 

Suppose that $\mu_2$ is defined as in (\ref{Eqn:CoherenceBounded}) and is bounded by $\nu P$ for some  $0<\nu \le\mu_{\infty}$ depending on the measure of $\bm Y$. Then, to insure recovery to within $\tau$ of minimum mean-squared-error with probability at least $\varrho$, we may take
\begin{align*} 
 N_{\tau,\varrho} \ge \nu\cdot \max\left\{\frac{4\mathbb{E}(\epsilon^2)}{\tau}\cdot P,10P\cdot\log\left(\frac{2P}{1-\varrho}\right)\right\},
\end{align*}
which scales linearly in $P$ up to logarithmic factors. If $\mu_2$ is defined as in (\ref{Eqn:CoherenceEll2Unbounded}) and $\varrho$ is chosen so that $\varrho < 1 - 1/P$, then
\begin{align*}
 N_{\tau,\varrho} \ge \nu\cdot \max\left\{\frac{4\mathbb{E}(\epsilon^2)}{\tau}\cdot P, 10P\cdot\log\left(\frac{2P}{1-1/P-\varrho}\right)\right\},
\end{align*}
which scales similarly.
\end{cor}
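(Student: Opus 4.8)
The plan is to derive the corollary directly from Theorem~\ref{thm:numSamples} by imposing two separate requirements on $N$---one that controls the mean-squared error and one that controls the failure probability---and then taking the larger of the two thresholds.

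\emph{Accuracy requirement.} Theorem~\ref{thm:numSamples} guarantees $\mathbb{E}(\Vert u-\hat u\Vert^2_{L_2(\Omega,f)};\mathcal{E})\le\mathbb{E}(\epsilon^2)+\tau$ as soon as $\tau N_\tau\ge 4\mathbb{E}(\epsilon^2)\mu_2$. Under the hypothesis $\mu_2\le\nu P$ it therefore suffices to require $N\ge 4\nu P\,\mathbb{E}(\epsilon^2)/\tau$, which is the first argument of the maximum in the conclusion.

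\emph{Probability requirement.} When $\mu_2$ is defined by (\ref{Eqn:CoherenceBounded}), the event $\mathcal{E}$ occurs with probability at least $1-2P\exp(-0.1N\mu_2^{-1})$. Since $\mu_2\le\nu P$ implies $0.1N\mu_2^{-1}\ge 0.1N(\nu P)^{-1}$, we have $2P\exp(-0.1N\mu_2^{-1})\le 2P\exp(-0.1N(\nu P)^{-1})$, so it is enough to force $2P\exp(-0.1N(\nu P)^{-1})\le 1-\varrho$. Taking logarithms and rearranging (exactly the algebraic step already performed in the paragraph preceding the corollary) yields $N\ge 10\nu P\log\!\big(2P/(1-\varrho)\big)$, the second argument of the maximum. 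Choosing $N_{\tau,\varrho}$ to exceed both thresholds---equivalently, to exceed their maximum---simultaneously secures both the error and the probability guarantees, and the linear-up-to-logarithmic growth in $P$ is read off from the second term. For the second statement, definition (\ref{Eqn:CoherenceEll2Unbounded}) gives $\mathbb{P}(\mathcal{E})\ge 1-1/P-2P\exp(-0.1N\mu_2^{-1})$, so the same manipulation now demands $2P\exp(-0.1N(\nu P)^{-1})\le 1-1/P-\varrho$; this is only solvable when $1-1/P-\varrho>0$, i.e.\ $\varrho<1-1/P$, and then produces $N\ge 10\nu P\log\!\big(2P/(1-1/P-\varrho)\big)$, which combined with the unchanged accuracy requirement gives the stated bound.

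The argument is entirely elementary; the only points that need care are (i) checking that replacing $\mu_2$ by its upper bound $\nu P$ moves in the favorable direction in both places it appears---with a positive sign in the accuracy threshold and in a denominator inside the exponent of the tail bound, so that the exponent's magnitude can only grow---and (ii) noting that in the (\ref{Eqn:CoherenceEll2Unbounded}) case the logarithm is well-defined, and the resulting bound meaningful, precisely under the stated restriction $\varrho<1-1/P$.
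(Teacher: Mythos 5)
Your proposal is correct and follows essentially the same route as the paper, which derives the corollary in the text immediately preceding it by combining the accuracy threshold $\tau N_\tau \ge 4\mathbb{E}(\epsilon^2)\mu_2$ from Theorem~\ref{thm:numSamples} with the rearrangement of the tail bound $2P\exp(-0.1N/(\nu P))\le 1-\varrho$ into $N\ge 10\nu P\log\left(2P/(1-\varrho)\right)$, then taking the maximum. If anything, your write-up is slightly cleaner on the direction of the inequalities and on the observation that $\varrho<1-1/P$ is exactly what makes the logarithm in the second case well-defined.
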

We note that the separation of the two sample size requirements within the $\max$ operator of Corollary \ref{cor:SamplesExplicit} is fruitful in that each is sharp with regards to its derivation in the order of samples required.

The left term in the maximum is related to recovery when a quality sampling has been achieved. For general $\ell_2$ recovery with $P$ unknowns, because of the the existence of a non-trivial null-space with as many as $P-1$ samples, a linear growth in the number of samples is the minimum order that is anticipated for this term.

The right term in the maximum is related to identifying that a quality sampling has occurred with probability $\varrho$. The logarithmic term for this bound is also tight in certain contexts, and is related to the {\it coupon collector} problem (that is anticipating how many draws from a uniform distribution on $\{1,\cdots,P\}$ are needed to have a draw from each of $\{1,\cdots,P\}$). We refer the interested reader to Remark 5.6 of~\cite{Tropp12a}.

This decoupling of constants is useful for bounding the accuracy of a computed solution while guaranteeing a certain probability of successful recovery, or similarly, identifying a probability of successful recovery while guaranteeing a certain accuracy in the computed solution. 

\subsection{\texorpdfstring{A More Robust Noise Model}{A More Robust Noise Model}}
\label{subsec:fullErrorthms}

In this section we consider $u$ as a function of $(\bm{\Xi},\bm{\chi})$, where $\bm{\chi}$ are unobserved random variables that may have parameters which are functions of realized $\bm{\xi}$, and are useful to account for model and measurement errors. As we do not observe $\bm{\chi}$, we do not attempt to compute an estimate $\hat{u}$ that depends on $\bm{\chi}$ and instead attempt to identify the conditional random variable,
\begin{align*}
u(\bm{\Xi}) &:= \mathbb{E}\left(u(\bm{\Xi},\bm{\chi})\big|\bm{\Xi}\right),
\end{align*}
noting that there should be no confusion in the notation when function arguments are present. We then consider the error model,
\begin{align}
u(\bm{\Xi},\bm{\chi})&:=u(\bm{\Xi})+\epsilon_M(\bm{\chi}|\bm{\Xi});\\ 
u(\bm{\Xi})&:=\mathop{\sum}\limits_{k=1}^P c_k \psi_{k}(\bm{\Xi}) + \epsilon_T(\bm{\Xi}),
\end{align}
where $\epsilon_T$ is still interpreted as a truncation error that depends only on $\bm{\Xi}$, and $\epsilon_M(\bm{\chi}|\bm{\Xi})$. This model gives us the following theorem, which is comparable to but significantly different than Theorem 3 in~\cite{CDL13}.

\begin{thm}
\label{thm:SampleDepthNoise}
Using the same definitions as in Theorem~\ref{thm:SampleDepth}, it follows that
\begin{align*}
\mathbb{E}\left(\Vert u-\hat{u}\Vert^2_{L_2(\Omega,f)}; \mathcal{E}\right)\le \mathbb{E}(\epsilon_T^2)+ 8\mu_{2}\frac{\mathbb{E}(\epsilon_T^2)+\mathbb{E}(\epsilon_M^2)}{N}.
\end{align*}
\end{thm}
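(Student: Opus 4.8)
The plan is to mirror the proof of Theorem~\ref{thm:SampleDepth}, reusing wholesale its sampling event $\mathcal{E}$ and the matrix‑concentration estimate behind it, and to insert as the only genuinely new ingredient the way the measurement error $\epsilon_M$ propagates into the estimated coefficients. First I would recall from the proof of Theorem~\ref{thm:SampleDepth} that $\mathcal{E}$ is (essentially) the event on which $\|\tfrac1N(\bm{W\Psi})^T(\bm{W\Psi})-\bm I\|_2\le\tfrac12$, which holds with the stated probability by a matrix Chernoff bound applied to the i.i.d.\ rank‑one summands $w(\bm\xi^{(i)})^2\bm\psi(\bm\xi^{(i)})\bm\psi(\bm\xi^{(i)})^T$ (each of spectral norm at most $\mu_2$), together with the tail constraints in~(\ref{Eqn:CoherenceEll2Unbounded}); on $\mathcal{E}$ the normal equations have the unique solution $\hat{\bm c}=\big((\bm{W\Psi})^T(\bm{W\Psi})\big)^{-1}(\bm{W\Psi})^T\bm W\bm u$ and the inverse obeys $\big((\bm{W\Psi})^T(\bm{W\Psi})\big)^{-1}\preceq\tfrac2N\bm I$.

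Next I would carry out the error split. Writing the data vector entrywise as $\bm u(i)=\sum_{k=1}^P c_k\psi_k(\bm\xi^{(i)})+\epsilon_T(\bm\xi^{(i)})+\epsilon_M(\bm\chi^{(i)}\mid\bm\xi^{(i)})$ and substituting into the formula for $\hat{\bm c}$ gives, on $\mathcal{E}$, $\hat{\bm c}-\bm c=\big((\bm{W\Psi})^T(\bm{W\Psi})\big)^{-1}(\bm{W\Psi})^T\bm W(\bm\epsilon_T+\bm\epsilon_M)$, with $\bm\epsilon_T,\bm\epsilon_M$ the vectors of realized errors. Because the $c_k$ are the orthogonal‑projection coefficients $c_k=\mathbb{E}(u(\bm\Xi)\psi_k)$, we have $\epsilon_T\perp\mathrm{span}\{\psi_k\}_{k=1}^P$ in $L_2(\Omega,f)$, so Pythagoras and orthonormality give the exact identity $\|u-\hat u\|_{L_2(\Omega,f)}^2=\|\epsilon_T\|_{L_2(\Omega,f)}^2+\|\bm c-\hat{\bm c}\|_2^2$. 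Taking restricted expectation over the sample draw and bounding $\mathbb{P}(\mathcal{E})\le1$ on the first summand, the theorem reduces to the estimate $\mathbb{E}(\|\bm c-\hat{\bm c}\|_2^2;\mathcal{E})\le 8\mu_2\big(\mathbb{E}(\epsilon_T^2)+\mathbb{E}(\epsilon_M^2)\big)/N$.

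For that estimate I would use the bound on the inverse to get $\|\bm c-\hat{\bm c}\|_2\le\tfrac2N\|(\bm{W\Psi})^T\bm W(\bm\epsilon_T+\bm\epsilon_M)\|_2$ pointwise on $\mathcal{E}$, square, and drop the restriction (the integrand is nonnegative), reducing matters to $\mathbb{E}\big(\|(\bm{W\Psi})^T\bm W(\bm\epsilon_T+\bm\epsilon_M)\|_2^2\big)$. Expanding over the $P$ components and over sample pairs $(i,j)$, the crucial observation is that $\mathbb{E}(\epsilon_M(\bm\chi\mid\bm\Xi)\mid\bm\Xi)=0$ together with $\mathbb{E}(\psi_k\epsilon_T)=0$ forces every $i\ne j$ term and every $\epsilon_T$–$\epsilon_M$ cross term (diagonal included, upon conditioning on $\bm\Xi$) to vanish; the surviving diagonal terms equal $N\sum_k\mathbb{E}\big(w^4\psi_k^2(\epsilon_T^2+\epsilon_M^2)\big)=N\,\mathbb{E}\big(w^2(\epsilon_T^2+\epsilon_M^2)\sum_k w^2\psi_k^2\big)\le N\mu_2\,\mathbb{E}\big(w^2(\epsilon_T^2+\epsilon_M^2)\big)$ by the definition of $\mu_2$, and the change‑of‑measure identity $\mathbb{E}_{f_{\bm Y}}(w^2h)=\mathbb{E}_f(h)$ implied by~(\ref{eqn:optimal_pdf_gen})–(\ref{eqn:weightFunction}) turns this into $N\mu_2(\mathbb{E}(\epsilon_T^2)+\mathbb{E}(\epsilon_M^2))$. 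Combining the displayed inequalities yields the claim.

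The main obstacle is not this clean chain of expectations but the bookkeeping forced by the truncation $\Omega\to\mathcal{S}$ used to make $\mu_2$ finite (definition~(\ref{Eqn:CoherenceEll2Unbounded})): under $f_{\bm Y}$ the weighted basis $\{w\psi_k\}$ is only approximately orthonormal (the residuals $\epsilon_{i,j}$ of~(\ref{Eqn:TransformIntegral})), the identity $\mathbb{E}(\psi_k\epsilon_T)=0$ becomes $\mathbb{E}(\psi_k\epsilon_T\bm 1_{\mathcal S})=-\mathbb{E}(\psi_k\epsilon_T\bm 1_{\mathcal{S}^c})\neq0$, and $\mathbb{E}_{f_{\bm Y}}(w^2h)$ equals only the $\mathcal{S}$‑restricted integral of $h$. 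Controlling these biases uses exactly the two constraints $\mathbb{P}(\mathcal{S}^c)\le1/(NP)$ and $\sum_k\mathbb{E}(|w\psi_k|^2\bm 1_{\mathcal{S}^c})\le\tfrac1{20}P^{-1/2}$ built into~(\ref{Eqn:CoherenceEll2Unbounded}), and propagating them through the perturbed normal equations without degrading the order in $N$, $\mu_2$ is the delicate step; it is also where the constant $8$ (versus the $4$ one gets in the idealized $\mathcal S=\Omega$ computation of Theorem~\ref{thm:SampleDepth}) originates, and where the present argument diverges from Theorem~3 of~\cite{CDL13}. When the bounded definition~(\ref{Eqn:CoherenceBounded}) is in force one may take $\mathcal S=\Omega$ and these complications disappear.
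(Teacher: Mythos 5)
Your proposal is correct and reaches the stated bound, but it diverges from the paper's argument at the key step, and the difference is worth noting. The paper splits the coefficient error into two pieces, $\bm{c}-\hat{\bm{c}}=\hat{\bm{a}}+\hat{\bm{a}}_m$ with $\bm{M}\hat{\bm{a}}=\bm{b}$ (truncation part) and $\bm{M}\hat{\bm{a}}_m=\bm{d}$ (measurement part), bounds each of $\mathbb{E}(\Vert\bm{b}\Vert_2^2;\mathcal{E})$ and $\mathbb{E}(\Vert\bm{d}\Vert_2^2;\mathcal{E})$ by $\mu_2\mathbb{E}(\epsilon_\cdot^2)/N$ separately, and recombines via $\Vert\hat{\bm{a}}+\hat{\bm{a}}_m\Vert_2^2\le 2\Vert\hat{\bm{a}}\Vert_2^2+2\Vert\hat{\bm{a}}_m\Vert_2^2$; that elementary inequality is the sole source of the constant $8$ (i.e., $4\cdot 2$). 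You instead keep $\bm{\epsilon}_T+\bm{\epsilon}_M$ as a single vector and observe that all $i\ne j$ terms and all $\epsilon_T$--$\epsilon_M$ cross terms (including diagonal ones, after conditioning on $\bm{\Xi}$) vanish, which yields the sharper constant $4$ in place of $8$ and, of course, implies the theorem as stated. Your closing speculation that the $8$ originates in the $\Omega\to\mathcal{S}$ truncation bookkeeping is therefore a misattribution: the paper's truncation handling is identical to that of Theorem~\ref{thm:SampleDepth} (restricted expectations over $\mathcal{S}$, the sup in $\mu_2$ taken over $\mathcal{S}$, and Lemma~\ref{lem:TruncLemma} to replace the $\mathcal{S}$-orthogonal truncation error by the global one), and contributes no constant. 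One small caution on your version: after squaring you ``drop the restriction'' entirely, but the pointwise bound $\sum_k|w\psi_k|^2\le\mu_2$ only holds on $\mathcal{S}$, so you must retain the restriction to the event that all samples lie in $\mathcal{S}$ (which is part of $\mathcal{E}$) exactly as the paper does when bounding the diagonal terms; with that retained, your chain of inequalities closes without the extra delicacy you anticipate.
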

This theorem demonstrates that the recovery in this case is quite similar to that of Theorem~\ref{thm:SampleDepth}. Applying the Markov inequality similarly gives a bound on failure probability in this case.
\begin{cor}
\label{cor:MarkovBoundNoise}
For any $\tau>0$,
\begin{align*} 
\mathbb{P}\left(\Vert u-\hat{u}\Vert^2_{L_2(\Omega,f)}\ge \mathbb{E}(\epsilon^2) +\tau \right)\le \frac{1}{P}+2P\exp\left(-\frac{0.1N}{\mu_2}\right)+\frac{8\mu_2}{N}\cdot\frac{\mathbb{E}(\epsilon^2)+\mathbb{E}(\epsilon_M^2)}{\tau}.
\end{align*}
Further, when the definition in (\ref{Eqn:CoherenceBounded}) is used for $\mu_2$, this bound holds without the $1/P$ term.
\end{cor}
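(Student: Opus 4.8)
The plan is to replicate, starting from Theorem~\ref{thm:SampleDepthNoise}, the Markov-inequality passage that turns Theorem~\ref{thm:SampleDepth} into Corollary~\ref{cor:MarkovBound}. First I would record the deterministic inequality
\[
\Vert u-\hat u\Vert^2_{L_2(\Omega,f)}\;\ge\;\mathbb{E}(\epsilon_T^2),
\]
valid for every realization of the design $\bm\xi^{(1)},\dots,\bm\xi^{(N)}$. In the robust model the target is $u(\bm\Xi)=\mathbb{E}(u(\bm\Xi,\bm\chi)\mid\bm\Xi)$ with $c_k=\mathbb{E}[u\psi_k]$, so that $\epsilon_T$ is the $L_2(\Omega,f)$-orthogonal residual of $u$ to $\mathrm{span}\{\psi_k\}_{k=1}^P$, while $\hat u\in\mathrm{span}\{\psi_k\}_{k=1}^P$; the Pythagorean identity $\Vert u-\hat u\Vert^2_{L_2(\Omega,f)}=\Vert\epsilon_T\Vert^2_{L_2(\Omega,f)}+\Vert\sum_k c_k\psi_k-\hat u\Vert^2_{L_2(\Omega,f)}$ then gives the bound. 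Hence $Z:=\Vert u-\hat u\Vert^2_{L_2(\Omega,f)}-\mathbb{E}(\epsilon_T^2)\ge 0$, and (identifying $\mathbb{E}(\epsilon^2)$ in the statement with $\mathbb{E}(\epsilon_T^2)$) the event in the corollary is $\{Z\ge\tau\}$.

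Next I would split over the sampling event $\mathcal E$ of Theorem~\ref{thm:SampleDepth} and its complement,
\[
\mathbb{P}(Z\ge\tau)\;\le\;\mathbb{P}(\mathcal E^c)\;+\;\mathbb{P}\big(\{Z\ge\tau\}\cap\mathcal E\big).
\]
The first term is at most $1/P+2P\exp(-0.1N\mu_2^{-1})$ by the probability guarantee already supplied in Theorem~\ref{thm:SampleDepth} — and at most $2P\exp(-0.1N\mu_2^{-1})$, without the $1/P$, precisely when $\mu_2$ is the bounded-coherence quantity~(\ref{Eqn:CoherenceBounded}), which gives the closing sentence of the corollary. For the second term, since $Z\mathbf 1_{\mathcal E}\ge 0$ and $\{Z\ge\tau\}\cap\mathcal E=\{Z\mathbf 1_{\mathcal E}\ge\tau\}$, Markov's inequality applied to the restricted expectation~(\ref{eqn:restricted_expectation}) yields
\[
\mathbb{P}\big(\{Z\ge\tau\}\cap\mathcal E\big)\;\le\;\frac{1}{\tau}\,\mathbb{E}(Z;\mathcal E)\;=\;\frac{1}{\tau}\Big(\mathbb{E}\big(\Vert u-\hat u\Vert^2_{L_2(\Omega,f)};\mathcal E\big)-\mathbb{E}(\epsilon_T^2)\,\mathbb{P}(\mathcal E)\Big).
\]
Inserting the bound of Theorem~\ref{thm:SampleDepthNoise} gives $\mathbb{E}(Z;\mathcal E)\le \mathbb{E}(\epsilon_T^2)\,\mathbb{P}(\mathcal E^c)+8\mu_2\big(\mathbb{E}(\epsilon_T^2)+\mathbb{E}(\epsilon_M^2)\big)/N$; the first summand here is of lower order — a tail probability times $\mathbb{E}(\epsilon_T^2)$ — and is absorbed exactly as in the passage from Theorem~\ref{thm:SampleDepth} to Corollary~\ref{cor:MarkovBound}, leaving $8\mu_2\big(\mathbb{E}(\epsilon_T^2)+\mathbb{E}(\epsilon_M^2)\big)/(N\tau)$. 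Adding the two contributions produces the claimed inequality.

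I do not expect a genuine obstacle: the argument is essentially bookkeeping. The one point requiring care is the interplay between restricted and unrestricted expectations — the right-hand side of Theorem~\ref{thm:SampleDepthNoise} carries the unrestricted $\mathbb{E}(\epsilon_T^2)$, whereas $\mathbb{E}(Z;\mathcal E)$ naturally produces $\mathbb{E}(\epsilon_T^2)\,\mathbb{P}(\mathcal E)$, so one must justify (via $\mathbb{P}(\mathcal E)\le 1$ and the nonnegativity of $Z$) that the residual $\mathbb{E}(\epsilon_T^2)\,\mathbb{P}(\mathcal E^c)/\tau$ is negligible relative to the retained terms, consistently with the treatment of the analogous $1/P$/exponential terms. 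A secondary check is that the event $\mathcal E$ and its probability lower bound are unaffected by the more general noise model, so that the same $1/P+2P\exp(-0.1N\mu_2^{-1})$ (or $2P\exp(-0.1N\mu_2^{-1})$ under~(\ref{Eqn:CoherenceBounded})) still applies.
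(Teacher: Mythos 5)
Your argument is correct and is essentially the paper's own (unwritten) derivation: the paper obtains this corollary exactly as it obtains Corollary~\ref{cor:MarkovBound}, namely by noting $\Vert u-\hat u\Vert^2_{L_2(\Omega,f)}-\mathbb{E}(\epsilon_T^2)\ge 0$, splitting over the sampling event $\mathcal{E}$ and its complement, and applying Markov's inequality to the restricted expectation with the bound of Theorem~\ref{thm:SampleDepthNoise}. The residual term $\mathbb{E}(\epsilon_T^2)\mathbb{P}(\mathcal{E}^c)/\tau$ that you flag is likewise silently dropped in the paper's statement of both corollaries, so your treatment is, if anything, more explicit than the original.
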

Additionally, the number of samples required to guarantee an accurate recovery in expectation is also similar to that of Theorem~\ref{thm:numSamples}, stated in the following theorem, which follows directly from Theorem~\ref{thm:SampleDepthNoise}.
\begin{thm}
\label{thm:numSamplesFullErr}
For any $\tau>0$, we seek to identify the number of samples, $N_\tau$, so that with probability at least $1-1/P-2P\exp(-0.1N\mu_{2}^{-1})$ (or $1-2P\exp(-0.1N\mu_{2}^{-1})$ if the definition in (\ref{Eqn:CoherenceBounded}) is used),
\begin{align*}
\mathbb{E}\left(\Vert u-\hat{u}\Vert^2_{L_2(\Omega,f)}; \mathcal{E}\right)\le \mathbb{E}(\epsilon_T^2)+ \tau.
\end{align*}
Stated another way, we wish to bound the number of samples so that we are within $\tau$ of the theoretically minimal mean squared error with high probability. This is achieved by allowing the number of samples to satisfy
\begin{align*}
\tau N_\tau &\ge 8\mu_{2}\left(\mathbb{E}(\epsilon_T^2)+\mathbb{E}(\epsilon_M^2)\right).
\end{align*}
\end{thm}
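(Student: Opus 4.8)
The plan is to read the claim off directly from Theorem~\ref{thm:SampleDepthNoise}, exactly as the proof of Theorem~\ref{thm:numSamples} is read off from Theorem~\ref{thm:SampleDepth}. Theorem~\ref{thm:SampleDepthNoise} already guarantees, on the sampling event $\mathcal{E}$ occurring with the stated probability, that
\begin{align*}
\mathbb{E}\left(\Vert u-\hat{u}\Vert^2_{L_2(\Omega,f)}; \mathcal{E}\right)\le \mathbb{E}(\epsilon_T^2)+ 8\mu_{2}\frac{\mathbb{E}(\epsilon_T^2)+\mathbb{E}(\epsilon_M^2)}{N}.
\end{align*}
So the only thing to do is to force the second (perturbation) term to be at most $\tau$. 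First I would impose $8\mu_{2}\bigl(\mathbb{E}(\epsilon_T^2)+\mathbb{E}(\epsilon_M^2)\bigr)/N\le\tau$ and rearrange it to $\tau N \ge 8\mu_{2}\bigl(\mathbb{E}(\epsilon_T^2)+\mathbb{E}(\epsilon_M^2)\bigr)$, which is precisely the asserted condition on $N_\tau$. Then substituting any $N=N_\tau$ satisfying this inequality back into the displayed bound yields $\mathbb{E}\left(\Vert u-\hat{u}\Vert^2_{L_2(\Omega,f)}; \mathcal{E}\right)\le \mathbb{E}(\epsilon_T^2)+\tau$, as claimed.

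Because the event $\mathcal{E}$ and its probability lower bound are inherited unchanged from Theorem~\ref{thm:SampleDepthNoise} (equivalently Theorem~\ref{thm:SampleDepth}), the probability statement transfers verbatim, including the dropping of the $1/P$ term when the coherence definition in~(\ref{Eqn:CoherenceBounded}) is in force; the only bookkeeping point is that the exponential factor $\exp(-0.1N\mu_2^{-1})$ in $\mathbb{P}(\mathcal{E})$ should be reported with $N=N_\tau$ rather than a generic $N$, and that the surrounding quantity denoted $\mathbb{E}(\epsilon^2)$ is to be understood as $\mathbb{E}(\epsilon_T^2)$ in this model. An equivalent route would be to apply the Markov-type estimate of Corollary~\ref{cor:MarkovBoundNoise} and tune $N$, but the direct substitution above is cleaner and gives the sharp constant.

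Since this is a purely algebraic consequence of Theorem~\ref{thm:SampleDepthNoise}, there is essentially no obstacle to overcome: all of the analytic work (the $\ell_2$ perturbation bound, the spectral-norm control of $(\bm{W\Psi})^T(\bm{W\Psi})$ on $\mathcal{E}$, and the truncation arguments behind the definition of $\mu_2$) is already packaged inside that theorem. The statement is simply the contrapositive reading of the error bound as a sample-count requirement, with $4\mathbb{E}(\epsilon^2)$ from the noiseless case replaced by $8\bigl(\mathbb{E}(\epsilon_T^2)+\mathbb{E}(\epsilon_M^2)\bigr)$ to account for the additional variance contributed by $\epsilon_M$.
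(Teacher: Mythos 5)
Your proposal is correct and matches the paper's own treatment: the paper gives no separate argument beyond noting that the result ``follows directly from Theorem~\ref{thm:SampleDepthNoise},'' i.e., exactly the rearrangement $8\mu_{2}\bigl(\mathbb{E}(\epsilon_T^2)+\mathbb{E}(\epsilon_M^2)\bigr)/N\le\tau$ that you carry out. Nothing further is needed.
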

We can make a similar explicit bound on the number of samples for this noise model as well.
\begin{cor}
\label{cor:SamplesExplicit2} 
Suppose that $\mu_2$ is defined as in (\ref{Eqn:CoherenceBounded}) and is bounded by $\nu P$ for some $0\le\nu \le\mu_{\infty}$ depending on the measure of $\bm Y$. Then to insure recovery to within $\tau$ of minimum mean-squared-error with probability at least $\varrho$, we may take
\begin{align*} 
 N_{\tau,\varrho} \ge \nu\cdot\max\left\{\frac{8\left(\mathbb{E}(\epsilon_T^2)+\mathbb{E}(\epsilon_M^2)\right)}{\tau}\cdot P,10P\cdot\log\left(\frac{2P}{1-\varrho}\right)\right\},
\end{align*}
which scales linearly in $P$ up to logarithmic factors. If $\mu_2$ is defined as in (\ref{Eqn:CoherenceEll2Unbounded}) and $\varrho$ is chosen so that $\varrho < 1 - 1/P$, then
\begin{align*}
 N_{\tau,\varrho} \ge \nu\cdot \max\left\{\frac{8\left(\mathbb{E}(\epsilon_T^2)+\mathbb{E}(\epsilon_M^2)\right)}{\tau}\cdot P,10P\cdot\log\left(\frac{2P}{1-1/P-\varrho}\right)\right\},
\end{align*}
which scales similarly.
\end{cor}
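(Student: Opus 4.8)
The plan is to mirror the derivation of Corollary~\ref{cor:SamplesExplicit}, now starting from Theorem~\ref{thm:SampleDepthNoise} (equivalently its reformulation in Theorem~\ref{thm:numSamplesFullErr}) rather than from Theorem~\ref{thm:SampleDepth}. Two requirements must be met simultaneously: (i) the sampling event $\mathcal{E}$ should occur with probability at least $\varrho$, and (ii) conditioned on being able to invoke Theorem~\ref{thm:SampleDepthNoise}, the additive error term $8\mu_2(\mathbb{E}(\epsilon_T^2)+\mathbb{E}(\epsilon_M^2))/N$ should be at most $\tau$. I would handle each in turn and then take $N_{\tau,\varrho}$ to exceed the larger of the two resulting thresholds.

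For requirement (ii), by Theorem~\ref{thm:SampleDepthNoise} it suffices that $N \ge 8\mu_2(\mathbb{E}(\epsilon_T^2)+\mathbb{E}(\epsilon_M^2))/\tau$; since $\mu_2 \le \nu P$ and this bound is increasing in $\mu_2$, the choice $N \ge 8\nu P(\mathbb{E}(\epsilon_T^2)+\mathbb{E}(\epsilon_M^2))/\tau$ is a (conservative) sufficient condition, which is exactly the left entry of the $\max$ in each displayed bound.

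For requirement (i), recall from the statement of Theorem~\ref{thm:SampleDepthNoise} (via Theorem~\ref{thm:numSamplesFullErr}) that $\mathbb{P}(\mathcal{E}) \ge 1 - 2P\exp(-0.1N\mu_2^{-1})$ when $\mu_2$ is the bounded-domain coherence of (\ref{Eqn:CoherenceBounded}), and $\mathbb{P}(\mathcal{E}) \ge 1 - 1/P - 2P\exp(-0.1N\mu_2^{-1})$ when the truncated definition (\ref{Eqn:CoherenceEll2Unbounded}) is used. Bounding $\exp(-0.1N\mu_2^{-1}) \le \exp(-0.1N/(\nu P))$ via $\mu_2 \le \nu P$, I would impose $2P\exp(-0.1N/(\nu P)) \le 1 - \varrho$ in the first case; taking logarithms and rearranging gives the equivalent condition $N \ge 10\nu P\log(2P/(1-\varrho))$, the right entry in the first $\max$. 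In the second case I impose $1/P + 2P\exp(-0.1N/(\nu P)) \le 1 - \varrho$, which is feasible only when $1 - 1/P - \varrho > 0$, i.e.\ $\varrho < 1 - 1/P$, and the same algebra then yields $N \ge 10\nu P\log(2P/(1-1/P-\varrho))$. Taking $N_{\tau,\varrho}$ at least the maximum of the two relevant thresholds makes both (i) and (ii) hold; the claimed linear-in-$P$-up-to-logarithmic-factors scaling is immediate from the two expressions.

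There is essentially no deep obstacle here: the argument is a routine combination of Theorem~\ref{thm:SampleDepthNoise} with the tail bound on $\mathbb{P}(\mathcal{E})$, identical in structure to the proof of Corollary~\ref{cor:SamplesExplicit}. The only points requiring a little care are (a) verifying that replacing $\mu_2$ by the upper bound $\nu P$ is conservative in \emph{both} the accuracy term and the failure-probability term (monotonicity in $\mu_2$ in each), and (b) correctly carrying the extra $1/P$ in the failure probability for the truncated-coherence definition (\ref{Eqn:CoherenceEll2Unbounded}), which is what forces the hypothesis $\varrho < 1 - 1/P$ and the modified logarithm in the second bound.
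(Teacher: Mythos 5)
Your proposal is correct and follows exactly the route the paper intends: it is the same two-part argument used for Corollary~1 (bound the additive error term via Theorem~\ref{thm:numSamplesFullErr} with $\mu_2\le\nu P$ to get the left entry of the $\max$, and force the tail probability $2P\exp(-0.1N/(\nu P))$ — plus the extra $1/P$ in the truncated case — below $1-\varrho$ to get the right entry), with the only change from the noiseless case being the doubled constant $8(\mathbb{E}(\epsilon_T^2)+\mathbb{E}(\epsilon_M^2))$ in place of $4\mathbb{E}(\epsilon^2)$. Your care about monotonicity in $\mu_2$ and the feasibility condition $\varrho<1-1/P$ matches what the paper leaves implicit.
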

\noindent{\bf Remark.} We note that the same considerations as followed Corollary~\ref{cor:SamplesExplicit} apply here in this slightly more general model, with the only difference being a doubling of the first multiplicative constant.

\section{\texorpdfstring{Sampling Methods}{Sampling Methods}}
\label{sec:sampling}
Here, we describe the sampling methods that we consider in this work, and present theorems related to recovery when we use them. Our objective is to investigate the implication of certain choices of sampling measure $f_{\bm Y}$ on $\mu_2$, and hence on the parameter $\nu$ that controls the bound on sampling rates in Corollaries \ref{cor:SamplesExplicit} and \ref{cor:SamplesExplicit2}. 

We first consider a sampling according to random variables defined by the orthogonality measure in Section~\ref{subsec:std}. Such a sampling, dubbed here  {\it standard sampling}, is commonly used in PC regression, \cite{Hosder06,LeMaitre10,Doostan11a,Mathelin12a}. We also consider sampling from a distribution associated with an asymptotic analysis, although not necessarily corresponding with the asymptotic distribution, of the orthogonal polynomials $\psi_k(\bm\Xi)$ in Section~\ref{subsec:asym}, and refer to it as {\it asymptotic sampling}. For the above two sampling methods, we discuss the dependence of $\mu_\infty$, an upper bound on $\nu$, on the dimensions $d$ of inputs and total order $p$ of the PC basis. Finally, in Section~\ref{subsec:MCMC}, we introduce the {\it coherence-optimal sampling} that corresponds to minimizing $\nu$, as opposed to $\mu_\infty$, directly. The empirical results of Section \ref{sec:examples} suggest that sampling strategies with (considerably) smaller $\mu_2$, lead also to improved solution recoveries, hence signifying the role of the coherence parameter $\mu_2$ in the actual performance of the least squares
PC regression. We note that the proofs for all but the last of the theorems in this section may be found with no changes within~\cite{Hampton14}. 

\subsection{\texorpdfstring{Standard Sampling}{Standard Sampling}}
\label{subsec:std}

We first consider sampling $\bm{\xi}$ according to the orthogonality measure $f(\bm\xi)$ of the PC bases, for which we set the weights $w(\bm\xi) =1$. For the $d$-dimensional Legendre polynomials the standard method corresponds to sampling from the uniform distribution on $[-1,1]^d$, while for $d$-dimensional Hermite polynomials this corresponds to samples from a multi-variate normal distribution such that each of $d$ coordinates is an independent standard normal random variable. 

For the standard sampling of orthogonal polynomials of arbitrary, total order $p$ in dimension $d$, the following results hold for $\mu_\infty$ (a bound on the parameter $\nu$ in theorems of Section \ref{sec:motivation}). 

\begin{thm}
\label{thm:NatSampleCoherence}
Assume that $d=o(p)$, that is, $d$ is asymptotically dominated by $p$. Additionally, let $N = O(P^k)$ for some $k>0$, that is the number of samples does not grow faster than a polynomial in the number of basis polynomials considered. For $d$-dimensional Hermite polynomials of total order $p\ge 1$, the coherence in (\ref{Eqn:CoherenceUnbounded}) is bounded by 
\begin{align}
\label{eqn:HerNatBound}
\mu_{\infty}(\bm{\Xi})&\le C_p\cdot \eta_p^p,
\end{align}
for some constants $C_p,\eta_p$ depending on $p$. For $d = o(p)$, and as $p\rightarrow\infty$, we may take $C_p$ and $\eta_p$ to be larger than but arbitrarily close to $1$ and $\exp(2-\log(2))\approx 3.6945$, respectively.

A standard sampling of the $d$-dimensional Legendre polynomials of total order $p$ gives a coherence of
\begin{align}
\label{eqn:LegNatBound}
\mu_{\infty}(\bm{\Xi})&\le \exp(2p).
\end{align}
\end{thm}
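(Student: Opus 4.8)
The plan is to reduce the $d$-dimensional bound to the univariate case via the tensor-product structure of the basis, and then invoke sharp asymptotic pointwise estimates for univariate Hermite and Legendre polynomials. Recall that $\psi_{\bm k}(\bm\xi)=\prod_{i=1}^d\psi_{k_i}(\xi_i)$ and, under standard sampling, $w(\bm\xi)=1$, so that
\begin{align*}
\mu_\infty(\bm\Xi)=\sup_{\bm\xi\in\mathcal{S},\,\|\bm k\|_1\le p}\prod_{i=1}^d|\psi_{k_i}(\xi_i)|^2
\le \prod_{i=1}^d\Big(\sup_{\xi_i}\sup_{0\le k_i\le p}|\psi_{k_i}(\xi_i)|^2\Big).
\end{align*}
The key observation is that the multi-index constraint $\|\bm k\|_1\le p$ is weaker than the coordinatewise constraint $k_i\le p$, so the product over $d$ factors of a per-coordinate bound is valid, but wasteful; one must instead exploit that the orders $k_i$ sum to at most $p$. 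So the first step is to bound $\sup_\xi|\psi_k(\xi)|^2$ (on the appropriate truncated set $\mathcal S$) by some expression $g(k)$ that is \emph{multiplicative-compatible}, i.e. such that $\prod_i g(k_i)\le h(p)$ whenever $\sum_i k_i\le p$.

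For the \textbf{Legendre case} this is clean: the classical bound $\sup_{\xi\in[-1,1]}|\widetilde P_k(\xi)|=\widetilde P_k(1)=\sqrt{2k+1}$ for the normalized Legendre polynomials gives $\sup_\xi|\psi_k(\xi)|^2\le 2k+1$. Since the bound is on a compact domain there is no truncation issue and $\mathcal S=\Omega=[-1,1]^d$. Then for $\|\bm k\|_1\le p$ we have $\prod_{i=1}^d(2k_i+1)$, which I would bound using the elementary inequality $\prod(2k_i+1)\le \exp(\sum 2k_i)\le\exp(2p)$, using $1+x\le e^x$ with $x=2k_i$. That immediately yields $\mu_\infty(\bm\Xi)\le\exp(2p)$, which is exactly \eqref{eqn:LegNatBound}. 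This part is essentially a two-line calculation once the univariate bound is in hand.

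For the \textbf{Hermite case} the work is genuinely harder, and I expect this to be the main obstacle. The univariate normalized Hermite polynomials $\psi_k$ are unbounded on $\mathbb R$, so one must first choose the truncation set $\mathcal S$ meeting the constraints in \eqref{Eqn:CoherenceEll2Unbounded} (a ball, or product of intervals, of radius growing like $\sqrt{p}$), and then apply the known asymptotic/uniform bounds for Hermite polynomials on that set — the relevant estimates from \cite{Szego,AskeyWainger,Hermite1,LagAsym}. The typical form is that on $|\xi|\lesssim\sqrt{k}$ one has $|\psi_k(\xi)|^2\le C\,k^{-1/6}$-type oscillatory bounds in the bulk but near the turning point the growth contributes a factor like $k^{-1/12}$; more relevantly, for the sup over $\mathcal S$ chosen to capture all but a tail of mass $\lesssim (NP)^{-1}$ one gets $\sup_{\xi\in\mathcal S}|\psi_k(\xi)|^2\le g(k)$ where $g(k)$ grows subexponentially in $k$ but, crucially, the product $\prod_i g(k_i)$ over $\sum k_i\le p$ must be controlled. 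The delicate point is tracking how the radius of $\mathcal S$ — which depends on $p$ (via the total order) and on $N=O(P^k)$ through the tail requirement, hence the hypotheses $d=o(p)$ and polynomial sample growth — feeds into each univariate sup, and then showing the resulting product telescopes into $C_p\,\eta_p^p$ with $\eta_p\to\exp(2-\log 2)$. I would organize this as: (i) fix $\mathcal S$ as a ball of radius $r_p$ and verify the two probabilistic constraints of \eqref{Eqn:CoherenceEll2Unbounded} using Gaussian tail bounds together with $N=O(P^k)$ and $d=o(p)$; (ii) on $\mathcal S$, bound each $|\psi_{k_i}(\xi_i)|^2$ by the uniform Hermite estimate, getting a factor that depends on $k_i$ and $r_p$; (iii) optimize the product $\prod_i(\text{factor})$ subject to $\sum k_i\le p$ — this is where a Lagrange-multiplier / AM-GM argument shows the worst case is a balanced allocation or a corner allocation, producing the base $\eta_p$; (iv) extract the limiting constant $\exp(2-\log 2)$ by letting $p\to\infty$ with $d=o(p)$ so that the number of active coordinates is negligible compared to $p$. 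The constant $\exp(2-\log 2)=e^2/2\approx 3.6945$ strongly suggests the bound reduces to something like $(e^2/2)^p$ arising from a $2^{-p}$ normalization factor against an $e^{2p}$-type growth, so I would reverse-engineer step (iii) to land exactly there.

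One auxiliary remark: since the theorem only claims $C_p,\eta_p$ "larger than but arbitrarily close to" their limits as $p\to\infty$, I do not need the sharp non-asymptotic constants — it suffices to show $\limsup_{p\to\infty}(\mu_\infty)^{1/p}\le e^2/2$ under the stated hypotheses, which relaxes the bookkeeping considerably and lets me absorb lower-order $d$-dependent and $N$-dependent factors into a $C_p$ that is $1+o(1)$ on the exponential scale. The cleanest route is therefore to phrase everything in terms of $\log\mu_\infty \le p\log\eta_p + \log C_p$ and estimate $\limsup\frac1p\log\mu_\infty$.
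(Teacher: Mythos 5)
First, note that the paper does not actually prove this theorem in the text you were given: Section 3 states that ``the proofs for all but the last of the theorems in this section may be found with no changes within [Hampton14],'' and Section 5 only proves Theorem~\ref{thm:LowPTransformedSamples}. So there is no in-paper proof to match against; I am assessing your argument on its own terms. Your Legendre half is complete and correct: $\sup_{[-1,1]}|\widetilde P_k|^2=2k+1$ needs no truncation, and $\prod_i(2k_i+1)\le\exp(2\sum_i k_i)\le\exp(2p)$ gives (\ref{eqn:LegNatBound}) exactly; this is also consistent with the paper's follow-up remarks that the sharper allocations give $3^p$ (corner) and $(2p/d+1)^d$ (balanced), so your reduction of the multi-index optimization is sound there.

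The Hermite half, however, has a genuine gap at precisely the step that carries the content of the theorem: you never derive the base $\exp(2-\log 2)=e^2/2$, and you say explicitly that you would ``reverse-engineer'' step (iii) to land on it. The difficulty is not mere bookkeeping. The admissibility conditions in (\ref{Eqn:CoherenceEll2Unbounded}) — in particular the requirement $\sum_k\mathbb{E}\bigl[|\psi_k(\bm Y)|^2\bm{1}_{\mathcal{S}^c}\bigr]\le\tfrac{1}{20}P^{-1/2}$ — force the truncation set to extend essentially to the turning points of the highest-order Hermite polynomials (radius $\approx\sqrt{4p+2}$ in each active coordinate; this is why the paper's asymptotic-sampling radius is $\sqrt{2}\sqrt{2p+1}$ up to normalization), because the density $|\widetilde{He}_k|^2f$ carries a non-vanishing fraction of its mass near its turning point. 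On a set of that size, the elementary route you gesture at (Cram\'er-type envelope $|\widetilde{He}_k(x)|^2\le K^2e^{x^2/2}$ evaluated at $x^2\approx 4p+2$, or equivalently $x^{2k}/k!$ at $x^2\approx 4p$) yields only $\mu_\infty\lesssim e^{2p}$, i.e.\ base $e^2\approx 7.39$, not $e^2/2\approx 3.69$. The entire point of the theorem is the improvement by the factor $2^{-p}$, which must come from a sharper, order-dependent pointwise estimate for Hermite polynomials inside the oscillatory region combined with the coupled optimization over $\sum_ik_i\le p$ and $\sum_i\xi_i^2\le r^2$ (each coordinate's supremum is governed by its own turning point $\sqrt{4k_i+2}$, not by the global radius), together with the hypotheses $d=o(p)$ and $N=O(P^k)$ to control the residual factors. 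Your outline (i)--(iv) is a reasonable scaffold and correctly identifies where the hypotheses enter, but without identifying the specific estimate that produces $2-\log 2$ in the exponent, the Hermite bound (\ref{eqn:HerNatBound}) with the stated asymptotic constant is not established; indeed, as written, your plan would terminate at the weaker bound $\exp(2p)$.
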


As shown in~{\cite{Hampton14}}, for the case of $p<d$ the bound in (\ref{eqn:LegNatBound}) may be improved to $\mu_{\infty}(\bm{\Xi})\le 3^p \approx \exp(1.1p)$. Additionally, for $p>d$, the bound in (\ref{eqn:LegNatBound}) is loose, but a sharper dimension-dependent bound is given by $(2p/d+1)^d$.\\[-.3cm]

\noindent{\bf Remark.} When sampling Hermite polynomials, we have the technical requirement that $N=O(P^k)$ for some finite $k$. We note that this includes the sample sizes suggested by the corollaries in Section~\ref{sec:motivation}.\\[-.3cm]

We note that the combination of the bounds $\nu\le \mu_\infty$ and (\ref{eqn:HerNatBound}), together with Corollary~\ref{cor:SamplesExplicit}, implies that the number of samples required for recovery from Hermite polynomials may grow exponentially with the total order of approximation.

\subsection{\texorpdfstring{Asymptotic Sampling}{Asymptotic Sampling}}
\label{subsec:asym}

We next consider taking $G(\bm{\xi})$ to be an asymptotic envelope for the polynomials as the order $p$ goes to infinity. For very large $p$, it then follows that $G(\bm{\xi})$ is a good approximation to $B(\bm{\xi})$.

For $d$-dimensional Hermite polynomials, an analysis that utilizes Hermite Functions and presented in~\cite{Hampton14} leads to a choice of $G(\bm\xi)$ corresponding} to sampling uniformly from within the $d$-dimensional ball of radius $\sqrt{2}\sqrt{2p+1}$. This leads to a weight function given by
\begin{align*}
w(\bm{\xi}):= \exp(-\|\bm{\xi}\|_2^2/4).
\end{align*}

Following \cite{Hampton14}, the algorithm below may be utilized to sample uniformly from the $d$-dimensional ball of radius $r$. First, let $\bm{Z}:=(Z_1,\cdots,Z_d)$ be a vector of $d$ independent normally distributed random variables with zero mean and the same variance. If $U$ is another independent random variable that is uniformly distributed on $[0,1]$, then
\begin{align*}
\bm{Y}&:=\frac{\bm{Z}}{\|\bm{Z}\|_2}rU^{1/d},
\end{align*}
represents a random sample uniformly distributed within the $d$-dimensional ball of radius $r$.

For the $d$-dimensional Legendre polynomials the asymptotic sampling measure is known coordinate-wise as the Chebyshev distribution on $[-1,1]^d$,~\cite{RauhutWard}, that is the distribution in each of $d$ coordinates is
\begin{align*}
f_Y(\xi)&:=\frac{1}{\pi\sqrt{1-\xi^2}},
\end{align*}
for $\xi\in[-1,1]$. Each coordinate is easily simulated from $\cos(\pi U)$ where $U$ is uniformly distributed on $[0,1]$. Additionally, this leads to a weight function given by
\begin{align*}
 w(\bm{\xi}):=\mathop{\prod}\limits_{i=1}^d(1-\xi_i^2)^{1/4}.
\end{align*}
In the following theorem we bound the coherence parameter $\mu_\infty$ associated with these alternative sampling of Hermite and Legendre polynomials, and demonstrate a weaker asymptotic dependence of $\mu_\infty$ on $p$..
\begin{thm}
\label{thm:TransformedSamples}
Assume that $N = O(P^k)$ for some $k>0$, that is, the number of samples does not grow faster than a polynomial in the number of basis polynomials considered. We note that this includes the sample sizes suggested by the Theorems in Section~\ref{sec:motivation}. Let $V(r,d)=(r\sqrt{\pi})^d/\Gamma(d/2+1)$ denote the volume inside the hypersphere with radius $r$ in dimension $d$. 

For the sampling of Hermite polynomials, sampling uniformly from the $d$-dimensional ball of radius $\sqrt{2}\sqrt{(2+\epsilon_{p})p+1}$, and weighting realized $\psi_k(\bm{\xi}^{(i)})$ on this ball by $w(\bm\xi^{(i)}) = \exp(-\|\bm{\xi}^{(i)}\|_2^2/4)$, gives
\begin{equation}
\mu_{\infty}=O(\pi^{-d/2}V(\sqrt{2p},d))=O((2p)^{d/2}/\Gamma(d/2+1)).
\end{equation}
Here, we note that $\epsilon_{p}\rightarrow 0$ if $d = o(p)$, and that the radius of the sampling is a factor of $\sqrt{2}$ times larger than the radius of the volume in the coherence, due to a normalization explained in~\cite{Hampton14}.

For the sampling of $d$-dimensional Legendre polynomials according to the $d$-dimensional Chebyshev distribution and weight $\psi_k(\bm{\xi})$ proportional to $w(\bm\xi) = \prod_{i=1}^d(1-\xi_i^2)^{1/4}$, regardless of the relationship between $d$ and $p$, we have that
\begin{align}
\label{eqn:LegTransformedSample}
\mu_{\infty}&\le 3^d.
\end{align}

\end{thm}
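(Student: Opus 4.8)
The plan is to treat the two bases in parallel. In each case I would first collapse the multivariate supremum defining $\mu_\infty$ to a one–dimensional estimate, exploiting that the weight $w$ and the polynomials $\psi_{\bm{k}}$ both factor over coordinates; then, for Hermite only, recover the stated $p$–dependence from the volume of the sampling region. Recall from Section~\ref{subsubsec:sampling} that the operative weight is $w(\bm{\xi})=1/(cG(\bm{\xi}))$, with $c$ the normalizer (\ref{eqn:normalizingconstant}) and $G$ the chosen envelope of $B$, and that the two rules in the statement correspond to $\mathcal{S}=\Omega=[-1,1]^d$ with $G(\bm{\xi})\propto\prod_{i=1}^d(1-\xi_i^2)^{-1/4}$ (Legendre), and to $\mathcal{S}$ the $d$-ball of radius $R:=\sqrt{2}\sqrt{(2+\epsilon_p)p+1}$ with $G(\bm{\xi})\propto e^{\|\bm{\xi}\|_2^2/4}\mathbf{1}_{\mathcal{S}}(\bm{\xi})$ (Hermite). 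In either case $w(\bm{\xi})\psi_{\bm{k}}(\bm{\xi})=\kappa_d\prod_{i=1}^d\bigl(\omega(\xi_i)\psi_{k_i}(\xi_i)\bigr)$ for a single scalar $\kappa_d$ absorbing the normalizers and a univariate weight $\omega$, so that $\mu_\infty\le\kappa_d^{2}\bigl(\sup_{x,n}\omega(x)^2\psi_n(x)^2\bigr)^d$; it then remains to supply a univariate uniform-in-degree bound on $\omega\psi_n$ and to evaluate $\kappa_d$.

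For Legendre, with the weight of Section~\ref{subsec:asym} we have $\omega(x)=(1-x^2)^{1/4}$, $\kappa_d=1$, and no truncation. The sole input is the classical bound $\sup_{n\ge0}\sup_{|x|\le1}(1-x^2)^{1/2}L_n(x)^2\le3$, which follows from Szeg\H{o}'s asymptotic estimate $|\bar P_n(\cos\theta)|\le(2/(\pi n\sin\theta))^{1/2}$ on the oscillatory arc together with $|\bar P_n|\le1$ and $\sin\theta\le1/n$ near the endpoints (recalling $L_n=\sqrt{2n+1}\,\bar P_n$); it is exactly the estimate underlying Chebyshev preconditioning in~\cite{RauhutWard,Hampton14}. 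Tensorizing yields $\mu_\infty\le3^d$, which is~(\ref{eqn:LegTransformedSample}) and is independent of $p$.

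For Hermite I would take $\omega(x)=e^{-x^2/4}$, so $\omega(x)\psi_n(x)$ is, up to a fixed rescaling, an $L^2(\mathbb{R},dx)$-normalized Hermite function; the univariate input is then the classical fact that Hermite functions are bounded uniformly in their degree, $\sup_{n\ge0}\sup_{x\in\mathbb{R}}e^{-x^2/2}\psi_n(x)^2\le C_0$ for an absolute constant $C_0$ (one checks $C_0=1$, with the maximum at $n=0$, $x=0$), obtained from the Plancherel--Rotach/Airy asymptotics in the turning-point regime $|x|\sim\sqrt{4n+2}$ plus exponential decay beyond it; see~\cite{Szego,AskeyWainger,Hermite1,LagAsym}. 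Tensorizing, $\sup_{\bm{\xi},\,\|\bm{k}\|_1\le p}\prod_i e^{-\xi_i^2/2}\psi_{k_i}(\xi_i)^2\le C_0^d$, and it remains to compute $\kappa_d$. Writing $G=a\,e^{\|\bm{\xi}\|_2^2/4}\mathbf{1}_{\mathcal{S}}$ one has $\kappa_d=1/(ca)$ while $c^{-2}=\int_{\mathcal{S}}f(\bm{\xi})G(\bm{\xi})^2\,d\bm{\xi}=a^2(2\pi)^{-d/2}\mathrm{vol}(\mathcal{S})$, so the free constant $a$ cancels and $\kappa_d^{2}=(2\pi)^{-d/2}\mathrm{vol}(\mathcal{S})$ --- it is precisely this normalizer, required to keep $\{w\psi_k\}$ approximately orthonormal and hence needed by the recovery theorems of Section~\ref{sec:motivation}, that carries the $p$-dependence. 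Since $\mathrm{vol}(\mathcal{S})=V(R,d)$ with $R\to2\sqrt{p}$ as $\epsilon_p\to0$, this is $(1+o(1))(2\pi)^{-d/2}V(2\sqrt{p},d)=(1+o(1))\pi^{-d/2}V(\sqrt{2p},d)$, whence $\mu_\infty\le C_0^d\kappa_d^2=O\bigl(\pi^{-d/2}V(\sqrt{2p},d)\bigr)=O\bigl((2p)^{d/2}/\Gamma(d/2+1)\bigr)$, the factor $\sqrt{2}$ between the sampling radius $R\asymp2\sqrt{p}$ and the radius $\sqrt{2p}$ appearing in the coherence being the $2\pi$ versus $\pi$ in these two expressions. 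Finally one checks that $\mathcal{S}$ is admissible in (\ref{Eqn:CoherenceEll2Unbounded}): using $B(\bm{\xi})^2\le P\,C_0^d\,e^{\|\bm{\xi}\|_2^2/2}$, both constraints reduce to an upper-tail bound $\mathbb{P}\bigl(\|\bm{\Xi}\|_2^2>2((2+\epsilon_p)p+1)\bigr)\le(\mathrm{poly}(P))^{-1}$ for a chi-squared variable with $d$ degrees of freedom, which holds by the standard large-deviation estimate once $d=o(p)$ and $N=O(P^k)$ --- exactly the role of the slack $\epsilon_p$ and of those two hypotheses.

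The hard part is the Hermite univariate estimate. Unlike the Legendre bound, which is a short elementary consequence of Szeg\H{o}'s asymptotics, controlling $e^{-x^2/2}\psi_n(x)^2$ uniformly in $n$ requires the full Plancherel--Rotach description across the Airy transition at the turning point, and one must track absolute constants so that $C_0^d$ and the tail estimates above are genuinely uniform in $n$, $d$, and $p$; this is where the cited asymptotics~\cite{Szego,AskeyWainger,Hermite1,LagAsym} do the work, as carried out in~\cite{Hampton14}. The remaining ingredients --- the cancellation of $a$, and the recognition that $R\asymp2\sqrt{p}$ is the calibrated choice (small enough to keep $\mathrm{vol}(\mathcal{S})$, hence $\mu_\infty$, of the stated order, yet large enough that $G\ge B$ on $\mathcal{S}$ and the truncation is admissible) --- are routine bookkeeping.
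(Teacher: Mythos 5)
The paper contains no proof of this theorem: Section~\ref{sec:sampling} explicitly defers it to~\cite{Hampton14} (``the proofs for all but the last of the theorems in this section may be found with no changes within~\cite{Hampton14}''), so there is nothing internal to compare against. Your reconstruction follows the same route as that reference --- tensorize over coordinates, control each factor by a degree-uniform univariate bound (Bernstein/Rauhut--Ward for Chebyshev-preconditioned Legendre, Cram\'er/Plancherel--Rotach for Hermite functions), and for Hermite extract the $p$-dependence from the normalizer $c^{-2}\propto(2\pi)^{-d/2}\mathrm{vol}(\mathcal{S})$ --- and your Hermite accounting (the cancellation of $a$, the identity $(2\pi)^{-d/2}V(R,d)=(R^2/2)^{d/2}/\Gamma(d/2+1)$, and the chi-squared tail argument for admissibility of $\mathcal{S}$ under $d=o(p)$ and $N=O(P^k)$) checks out. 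One bookkeeping slip on the Legendre side: the coherence (\ref{Eqn:CoherenceUnbounded}) is defined through the normalized weight $w=1/(cG)$ of (\ref{eqn:weightFunction}), and since $\int_{-1}^{1}(1-\xi^2)^{1/2}L_n(\xi)^2\,\pi^{-1}(1-\xi^2)^{-1/2}\,d\xi=2/\pi\neq 1$, the per-coordinate normalizer is $\sqrt{\pi/2}$ and hence $\kappa_d^2=(\pi/2)^d$, not $1$; paired with your stated raw bound of $3$ this would yield $(3\pi/2)^d$. The repair is that Bernstein's inequality actually gives the sharper $\sup_{|x|\le1}(1-x^2)^{1/2}L_n(x)^2\le 2(2n+1)/(\pi n)\le 6/\pi$ for $n\ge1$ (and $\le 1$ for $n=0$), so that $(\pi/2)\cdot(6/\pi)=3$ recovers (\ref{eqn:LegTransformedSample}); your final constant is correct, but the split into $\kappa_d$ and the unnormalized supremum is not internally consistent as written.
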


We note that for Legendre polynomials sampled by Chebyshev distribution we have a complete independence of the order of approximation, which agrees with previous results in~\cite{RauhutWard}. We also highlight that the results of Theorems \ref{thm:NatSampleCoherence} and \ref{thm:TransformedSamples} suggest sampling Hermite and Legendre polynomials from the standard distribution when $d>p$ and from the asymptotic distribution when $d<p$. A similar observation has been made in \cite{Yan12}. \\[-.3cm]

\noindent{\bf Remark.} In the numerical results of this work, for uniform sampling of Hermite polynomials, we set $\epsilon_{p}$ in Theorem~\ref{thm:TransformedSamples} to be zero, leaving as an open problem the determination of an optimal $\epsilon_{p}$, and hence sampling radius.

\subsection{\texorpdfstring{Coherence-optimal Sampling}{Coherence-optimal Sampling}}
\label{subsec:MCMC}

We finally consider taking $G(\bm{\xi})=B(\bm{\xi})$ in (\ref{eqn:optimal_pdf_gen}), which implies sampling $\bm{\xi}$ according to the distribution
\begin{equation}
\label{eqn:optimal_pdf}
f_{\bm Y}(\bm \xi)=c^2f(\bm{\xi})B^2(\bm{\xi}),
\end{equation}
with $c^2 = \left(\int_{\mathcal{S}}f(\bm{\xi})B^2(\bm{\xi})d\bm{\xi}\right)^{-1}$, and where $f(\bm{\xi})$ is the measure with respect to which $\{\psi_k(\bm\xi)\}_{k=1}^{P}$ are naturally orthogonal. Corresponding to this sampling, we apply the weight function
\begin{align*}
w(\bm{\xi})=\frac{1}{cB(\bm{\xi})},
\end{align*}
or $w(\bm{\xi})=1/B(\bm{\xi})$ in practice.
\subsubsection{\texorpdfstring{Sampling via Markov Chain Monte Carlo}{Sampling via Markov Chain Monte Carlo}}
\label{subsubsec:MCMCmethod}

To avoid computing the normalization constant $c$ in (\ref{eqn:optimal_pdf}), we sample $\bm{\Xi}$ from (\ref{eqn:optimal_pdf}) using a Monte Carlo Markov Chain (MCMC) approach, specifically via the Metropolis-Hastings sampler~\cite{Hastings}, which requires evaluations of $B(\bm{\xi})$ from (\ref{eqn:btspec}). Additionally, this sampling distribution allows the easy evaluation of $w(\bm{\xi})$ using only the realized samples.

In more detail, the MCMC sampler requires a candidate distribution and when $p\le d$, we suggest those obtained from Section~\ref{subsec:std}, giving a standard normal sampling for Hermite polynomials, and sampling uniformly for $[-1,1]^d$ for Legendre polynomials. Similarly, when $p>d$ we suggest those obtained from Section~\ref{subsec:asym}, giving a uniform sampling on a $d$-dimensional ball for Hermite polynomials, and $d$-dimensional Chebyshev sampling for Legendre polynomials. Note that each proposal distribution covers the entire domain $\mathcal{S}$, and if the proposal and target distribution approximately match, then the acceptance rate is high and few burn-in samples are needed to approximately draw from the desired distribution for $\bm{Y}$. One caveat which we note is that the proofs of Theorems~\ref{thm:SampleDepth} and~\ref{thm:SampleDepthNoise} require independent sampling, so that it is proper to restart a chain after each accepted sample, but a more practical method is to discard intermediate samples so that serial dependence is small,~\cite{MCMCBook}. We note that in applications where evaluation of the QoI is expensive to evaluate, the generation of the samples, $\{\bm{\xi}^{(i)}\}_{i=1}^N$, is not typically a bottleneck, so that the extra cost for quality MCMC sampling is frequently acceptable in practice. Theorem~\ref{thm:LowPTransformedSamples} justifies the intuition that taking $G(\bm{\xi})$ associated with sampling to be the envelope function $B(\bm{\xi})$ leads to the minimal $\nu=1$, or equivalently $\mu_{2} = P$.

\begin{thm}
\label{thm:LowPTransformedSamples} 
Let $\mathcal{S}$ be a set chosen to satisfy the conditions of (\ref{Eqn:CoherenceEll2Unbounded})
implying that no subset $\mathcal{S}_s$ of $\mathcal{S}$ with $\mu_{2}(\mathcal{S}_s)<\mu_{2}(\mathcal{S})$ satisfies the conditions of (\ref{Eqn:CoherenceEll2Unbounded}).
Let $B(\bm{\xi})$ be as in (\ref{eqn:btspec}). If we sample from the distribution proportional to $f(\bm{\xi})B^2(\bm{\xi})$ and weight $\psi_k(\bm{\xi})$ proportional to $w(\bm\xi) =1/B(\bm{\xi})$, then the coherence parameter $\mu_2$ achieves a minimum over all sampling schemes of $\psi_{k}(\bm{\xi})$, $k=1:P$, and distributions supported on $\mathcal{S}$.

Further if $\mathcal{S}$ is chosen to be $\Omega$ then for this sampling strategy, $\mu_2 = P$. 
\end{thm}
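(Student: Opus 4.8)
The plan is to reduce the statement to a one-line ``water-filling'' observation: among nonnegative functions on $\mathcal{S}$ with a prescribed $f_{\bm{Y}}$-integral, the supremum is smallest when the function is constant, and the coherence-optimal weighting is precisely the one that makes $|w(\bm{\xi})B(\bm{\xi})|^2$ constant. First I would pin down the class over which the minimum is claimed. By the construction of Section~\ref{subsubsec:sampling}, every admissible sampling scheme supported on $\mathcal{S}$ is determined by a bound $G\ge B$ via $f_{\bm{Y}}=c^2 f G^2$ and $w=1/(cG)$; equivalently, by a sampling density $f_{\bm{Y}}$ on $\mathcal{S}$ together with the importance weight $w=\sqrt{f/f_{\bm{Y}}}$, up to the irrelevant normalizing constant. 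The only feature of this construction I need is the change-of-measure identity $w^2(\bm{\xi})\,f_{\bm{Y}}(\bm{\xi})=f(\bm{\xi})$ on $\mathcal{S}$, which is exactly what makes $\{w(\bm{Y})\psi_k(\bm{Y})\}_{k=1}^P$ have the correct second-moment structure (cf.\ (\ref{Eqn:TransformIntegral})); without it $\mu_2$ could be made arbitrarily small by rescaling $w$, so the minimum would be vacuous.

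Next, for an arbitrary admissible scheme, I would use $B^2=\sum_{k=1}^P|\psi_k|^2$ together with the identity above to evaluate the $f_{\bm{Y}}$-average of the integrand defining $\mu_2$,
\[
\int_{\mathcal{S}}|w(\bm{\xi})|^2 B^2(\bm{\xi})\,f_{\bm{Y}}(\bm{\xi})\,d\bm{\xi}=\int_{\mathcal{S}}B^2(\bm{\xi})f(\bm{\xi})\,d\bm{\xi}=\sum_{k=1}^P\int_{\mathcal{S}}|\psi_k(\bm{\xi})|^2 f(\bm{\xi})\,d\bm{\xi},
\]
a quantity that does not depend on the scheme. Since $f_{\bm{Y}}$ is supported in $\mathcal{S}$ and $\mu_2=\sup_{\bm{\xi}\in\mathcal{S}}|w(\bm{\xi})|^2 B^2(\bm{\xi})$, the average is at most $\mu_2$, giving the lower bound $\mu_2\ge\int_{\mathcal{S}}B^2 f\,d\bm{\xi}$ for every admissible scheme. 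Then I would check that the coherence-optimal scheme attains it: with $f_{\bm{Y}}\propto fB^2$ and $w=1/(cB)$ the product $|w(\bm{\xi})|^2 B^2(\bm{\xi})\equiv 1/c^2=\int_{\mathcal{S}}fB^2\,d\bm{\xi}$ is constant on $\mathcal{S}$ (excluding the $f_{\bm{Y}}$-null set $\{B=0\}$, which does not change the supremum), so ``$\sup$'' equals ``average'' and the bound is met with equality; hence $\mu_2$ is minimized. For the last claim I would set $\mathcal{S}=\Omega$ and use orthonormality of $\{\psi_k\}_{k=1}^P$ under $f$: then $\int_\Omega B^2 f\,d\bm{\xi}=\sum_{k=1}^P\int_\Omega|\psi_k|^2 f\,d\bm{\xi}=\sum_{k=1}^P 1=P$, so $\mu_2=P$.

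The hard part is conceptual rather than computational: correctly identifying the admissible class --- the importance-weight relation $w^2 f_{\bm{Y}}=f$ on $\mathcal{S}$ --- so that the optimization is well posed and the comparison in the statement is over exactly the right family; once that is fixed, the remaining content is the Jensen/averaging step above. Two minor technical points I would address in passing are (i) the sets where $f$ or $B$ vanishes, on which $w$ or $f_{\bm{Y}}$ is momentarily undefined: these are $f_{\bm{Y}}$-null, and one may replace $\mathcal{S}$ by $\mathcal{S}\cap\operatorname{supp}(f)\cap\{B>0\}$ without affecting any quantity above; and (ii) the role of the minimality of $\mathcal{S}$ assumed in the statement: it is not needed for optimality among schemes supported on a fixed $\mathcal{S}$, and only serves to make the attained value $\int_{\mathcal{S}}fB^2\,d\bm{\xi}$ as small as the constraints of (\ref{Eqn:CoherenceEll2Unbounded}) permit.
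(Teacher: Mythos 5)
Your proof is correct, and it is more self-contained than the paper's: for the optimality claim the paper simply defers to Theorem 4.5 of \cite{Hampton14}, whereas you supply the full argument. Your key move --- pinning down the admissible class by the change-of-measure identity $w^2 f_{\bm{Y}}=f$ on $\mathcal{S}$, so that $\int_{\mathcal{S}}w^2B^2f_{\bm{Y}}\,d\bm{\xi}=\int_{\mathcal{S}}B^2f\,d\bm{\xi}$ is scheme-independent and the supremum defining $\mu_2$ dominates this average --- is exactly the right one, and correctly identifies that without this normalization the minimization would be vacuous. The equality case (the integrand $w^2B^2$ is constant precisely for $G=B$) and the evaluation $\int_\Omega B^2f\,d\bm{\xi}=P$ by orthonormality reproduce, in a unified way, the paper's separate computation of the normalizing constant $c^2=1/P$ for the second claim. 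What your route buys is a single formula, $\mu_2=\int_{\mathcal{S}}B^2f\,d\bm{\xi}$ at the optimum with $\mu_2\ge\int_{\mathcal{S}}B^2f\,d\bm{\xi}$ in general, from which both assertions of the theorem fall out; your side remarks on the null sets $\{B=0\}$ (vacuous here since the constant polynomial forces $B\ge 1$) and on the role of the minimality assumption on $\mathcal{S}$ are accurate and do not affect the argument.
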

\section{Numerical Examples}
\label{sec:examples}

We next study the empirical performance of different sampling schemes discussed in Section~\ref{sec:sampling}, first exploring an estimate of the coherence parameters $\mu_2$ in Section~\ref{subsec:cohEstimates}. We then investigate the recovery of randomly generated functions in Section~\ref{subsec:RandomSignals}, and functions given by the solution to an ODE with random coefficient modeling the amount of reaction at a given time in an adsorption model from \cite{Makeev02} in Section~\ref{subsec:ODE}. Additionally, we consider a high-dimensional model of a thermally driven cavity flow in Section~\ref{subsec:Heat}.
\subsection{Estimates of Coherence $\mu_2$}
\label{subsec:cohEstimates}
The coherence parameter $\mu_2$ of Section~\ref{subsubsec:Coherence} can be estimated using a large sample of realized $\psi_k(\bm\xi)$. Doing so leads to the results in Figure~\ref{Fig:HermCoh} for Hermite polynomials, and those in Figure~\ref{Fig:LegCoh} for Legendre polynomials. We consider three sampling schemes, the standard scheme where we sample based on the underlying distribution of random variables in question as in Section~\ref{subsec:std}; an asymptotically motivated method to insure a coherence with weaker dependence on the order $p$ as in Section~\ref{subsec:asym}; and a coherence-optimal sampling based on the distribution proportional to the envelope of basis functions as in Section~\ref{subsec:MCMC}. We observe that standard sampling tends to perform poorly at high orders, while asymptotic sampling tend to perform poorly for high-dimensional problems. Additionally, coherence-optimal sampling performs well in all regimes. These observations are consistent with the theoretical results presented in Section \ref{sec:sampling}.

\begin{figure}[h!]
\centering
\includegraphics[scale=0.40]{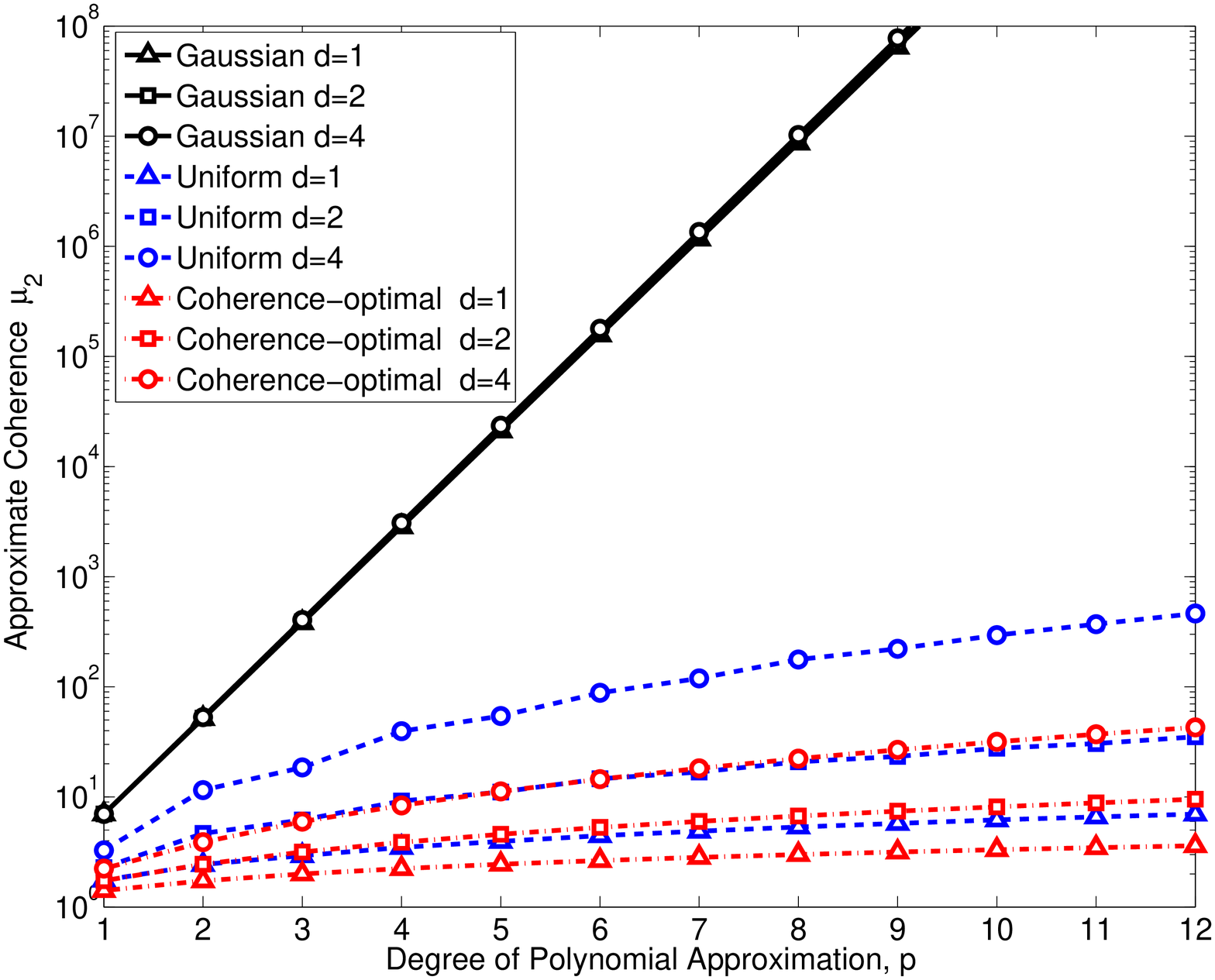}
\caption{Computed $\mu_2$ for different sampling methods of Hermite polynomials for different $d$ and $p$.}
\label{Fig:HermCoh}
\end{figure}
\begin{figure}[h!]
\centering
\includegraphics[scale=0.40]{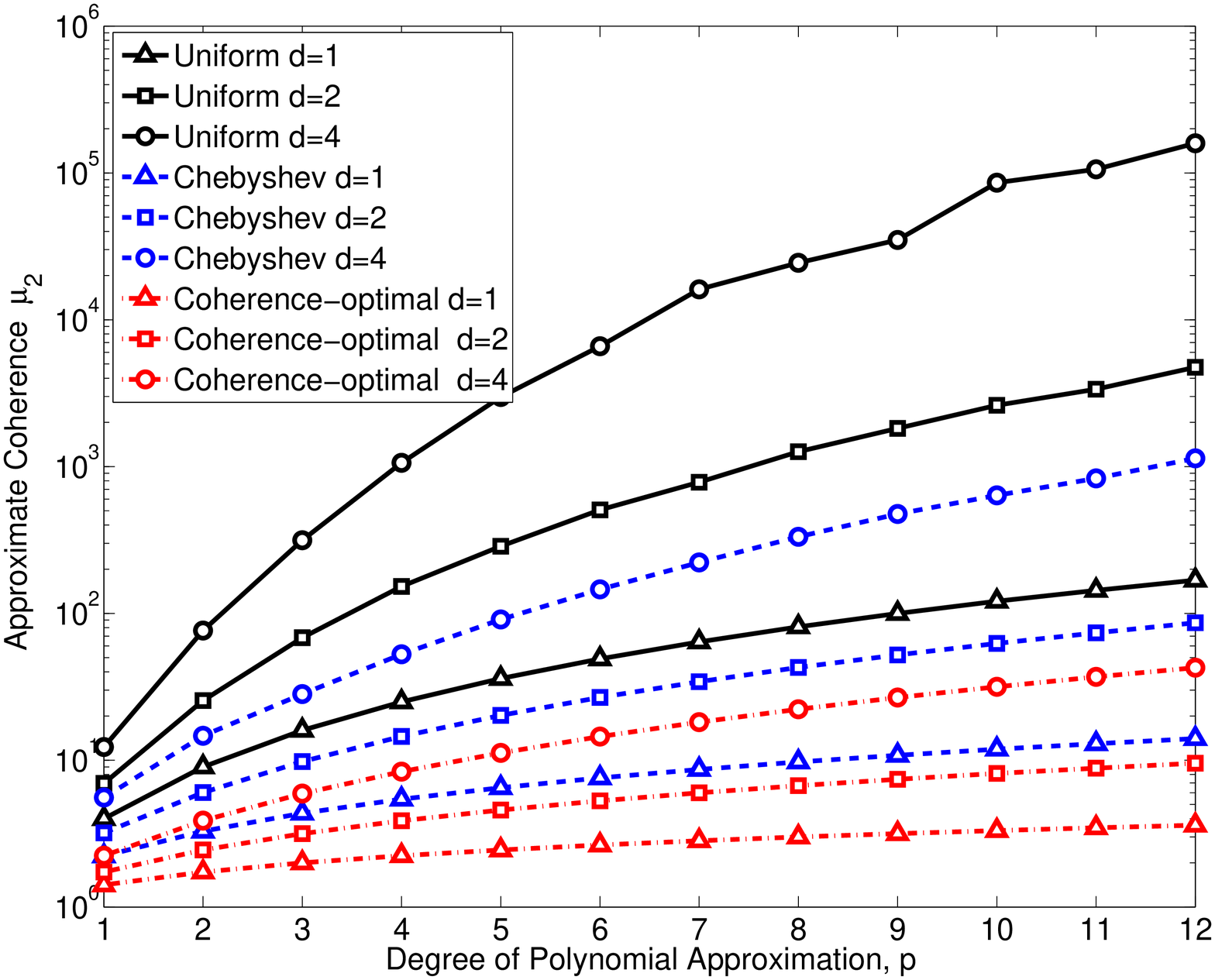}
\caption{Computed $\mu_2$ for different sampling methods of Legendre polynomials for different $d$ and $p$.}
\label{Fig:LegCoh}
\end{figure}
\subsection{Manufactured Functions}
\label{subsec:RandomSignals}

In this section, we investigate the reconstruction accuracy of the competing sampling schemes on randomly generated solution vectors, $\bm{c}$, such that $\bm{\Psi}\bm{c}=\bm{u}$. Here, each coordinate of $\bm{c}$ is an independent standard normal random variable, which implies that, unlike for most solutions of practical interest, the coordinates of $\bm{c}$ do not exhibit a decay with respect to the order of $\psi_k(\bm\xi)$. We measure reconstruction accuracy as a function of the number of independent samples of $\bm{Y}$, denoted by $N$. We declare $\hat{\bm{c}}$ to successfully recover $\bm{c}$ if $\|\hat{\bm{c}}-\bm{c}\|_2/\|\bm{c}\|_2\le 0.02$, where $\hat{\bm{c}}$ is a solution to (\ref{eqn:lsqr}) and in this work is computed using MATLAB's {\tt mldivide} function, which is more ubiquitiously known as the backslash ($\backslash$) matrix operation. Each success probability is calculated from an average of a large number of independent realizations of $\bm\Psi$ and $\bm c$. We consider two cases, one in which there is no noise in the evaluation of $\bm{u}$, and one in which there is independent, normally distributed additive noise in the evaluations of {\color{black}$u(\bm{\xi}^{(i)})$, each having standard deviation $0.03\cdot |u(\bm{\xi}^{(i)})|$}.

The results in Figures~\ref{Fig:HermRand} and~\ref{Fig:LegRand} identify a sharp transition in the ability of least squares problem (\ref{eqn:lsqr}) to recover $\bm c$ {\color{black}in the noisy cases}. In particular, we highlight the following notable observations: for the high order case $(d,p)=(2,30)$, the standard sampling fails to recover the solution, while the asymptotic sampling succeeds. For the high-dimensional case $(d,p)=(30,2)$, the standard sampling is better than the asymptotic sampling, and for the moderate values of $(d,p)=(5,5)$ the coherence-optimal method outperforms both asymptotic and standard sampling, while the standard sampling of Hermite expansion performs poorly. In all cases, the coherence-optimal sampling leads to recovery that is similar to those of the other two sampling strategies or provides considerable improvements. {\color{black}For the noisy cases, the recovery of the sampling strategies examined here is similar expect for the Hermite expansion with $(d,p)=(2,30)$ where the standard sampling fails to converge within the range of sample sizes considered.}

\begin{figure}[h!]
\hspace{-.7cm}
\includegraphics[trim = 30mm 0mm 0mm 0mm, clip, scale=0.5]{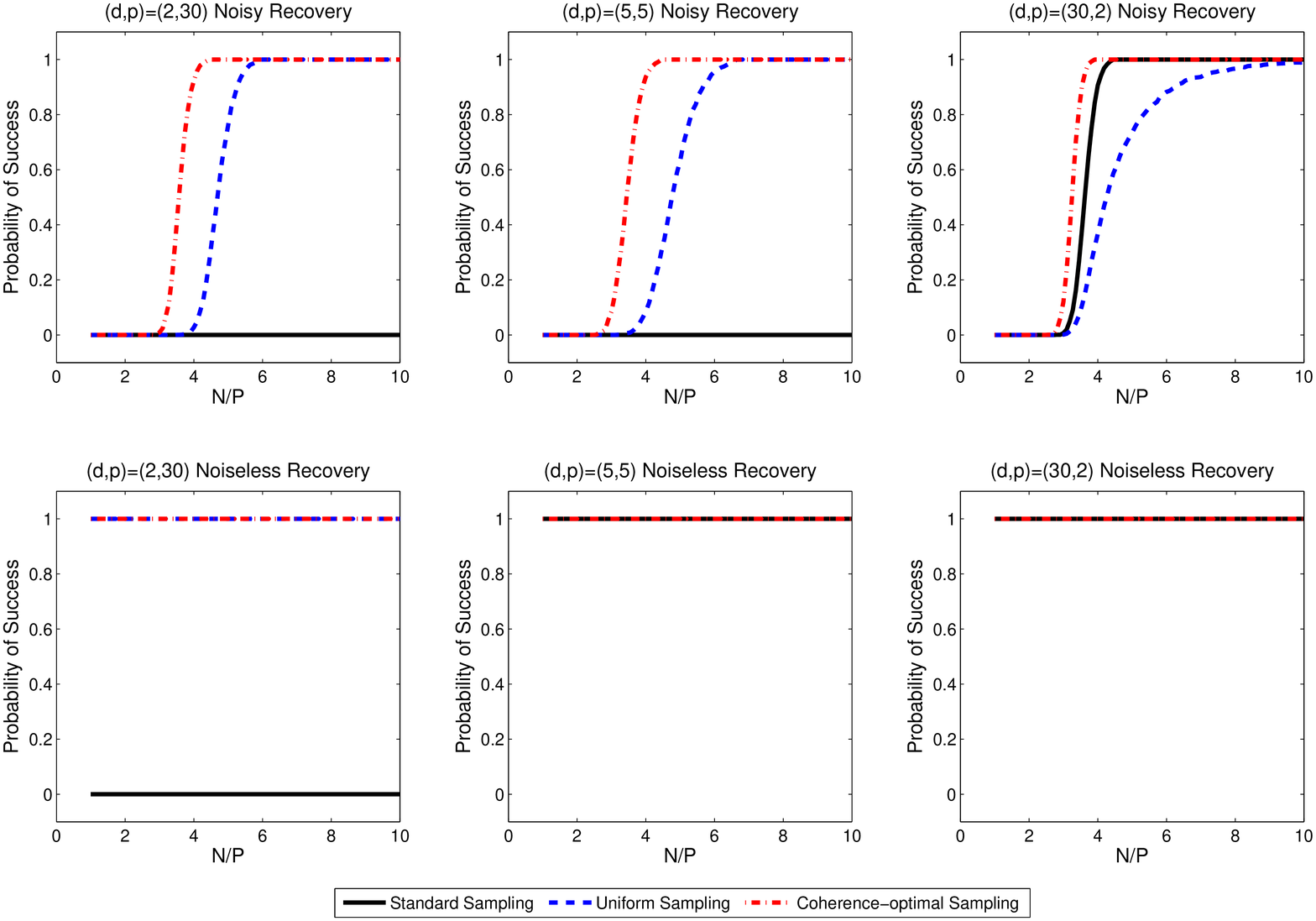}
\caption{Hermite recovery probability as a function of sample size $N$: Top graphs are for noisy recovery, while bottom are for noiseless recovery. Horizontal position corresponds to differing $(d,p)$.}
\label{Fig:HermRand}
\end{figure}

\begin{figure}[h!]
\hspace{-.7cm}
\includegraphics[trim = 30mm 0mm 0mm 0mm, clip, scale=0.5]{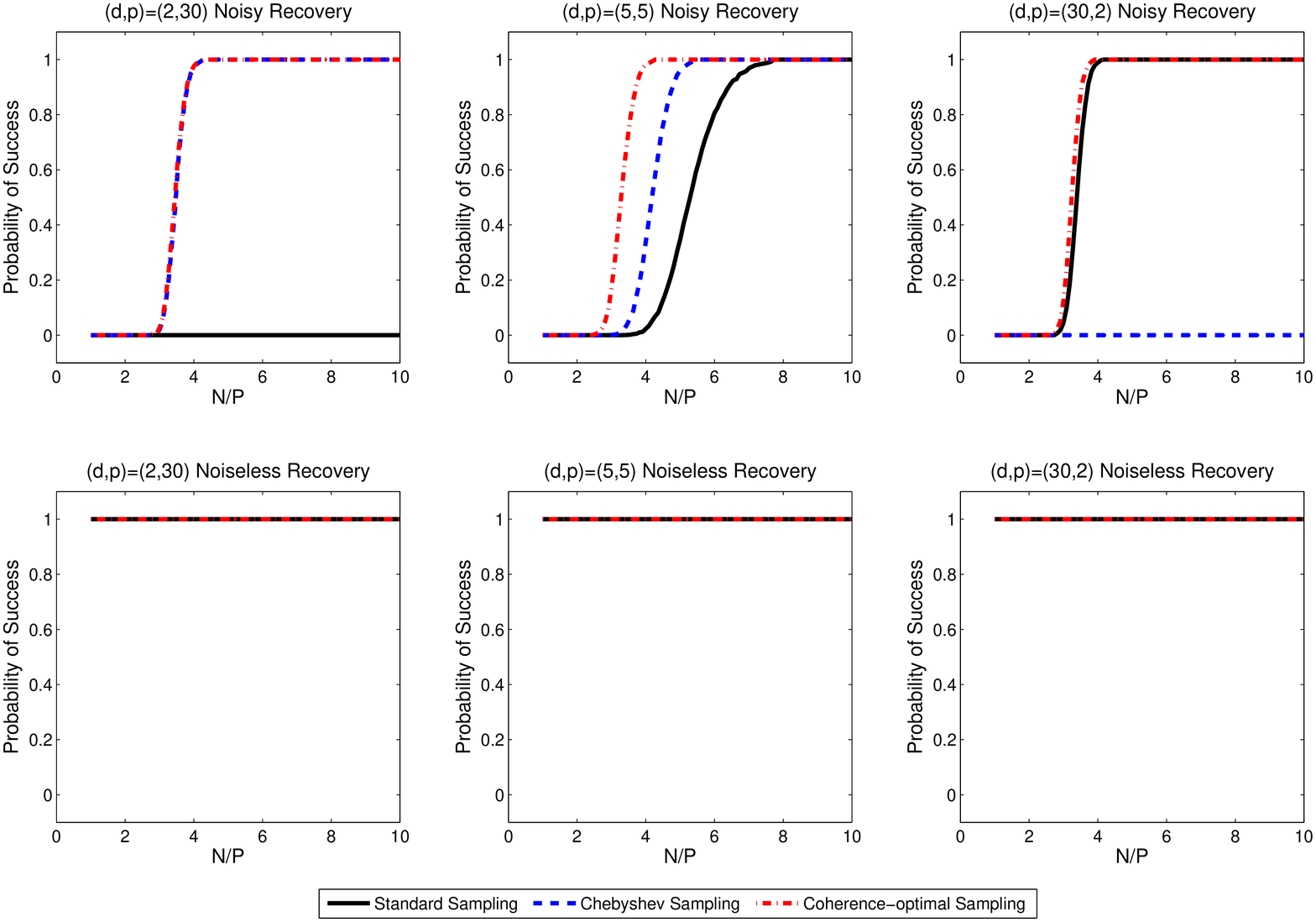}
\caption{Legendre recovery probability as a function of sample size $N$: Top graphs are for noisy recovery, while bottom are for noiseless recovery. Horizontal Position corresponds to differing $(d,p)$.}
\label{Fig:LegRand}
\end{figure}

\subsection{A Surface Reaction Model with Random Input}
\label{subsec:ODE}

The second problem of interest in this work is to quantify the uncertainty in the solution $\rho$ of the non-linear evolution equation  
\begin{align}
\label{eqn:reaction}
\left\{\begin{array}{l}\frac{d\rho}{dt} = \alpha(1-\rho) - \gamma\rho - \kappa(1-\rho)^2\rho, \\ 
\rho(t=0) = 0.9,\end{array}\right.
\end{align}
which models the surface coverage of certain chemical species, as examined in \cite{Makeev02,Lemaitre04b,Hampton14}. We consider uncertainty in the adsorption, $\alpha$, and desorption, $\gamma$, coefficients, and model both as shifted log-normal random variables. Specifically, we assume
\begin{align*}
\alpha &= 0.1 + \exp(0.05\ \Xi_1),\\
\gamma &= 0.001 + 0.01\exp(0.05\ \Xi_2),
\end{align*}
where $\Xi_1, \Xi_2$ are independent standard normal random variables; hence, the dimension of our random input is $d=2$. The reaction rate constant $\kappa=10$ in (\ref{eqn:reaction}) is assumed to be deterministic. 

Our QoI is $\rho_c:=\rho(t=4,\Xi_1,\Xi_2)$, and we consider a Hermite PC expansion of total order $p=32$, giving $P=561$ basis functions to approximate $\rho_c$. This high-order approximation is necessary due to the large gradient of $\rho_c$ in terms of the random variables, as evidenced by the relatively slow decay of coefficients in the reference solution presented in Figure~\ref{Fig:RefSolODE}. This reference solution is computed using Gauss-Hermite quadrature approximation of the PC coefficients.  

In Figure~\ref{Fig:ODERMSE}, we see plots of moments for the relative root-mean-squared error -- between the reference and least squares solutions -- as a function of the number of samples, $N$, obtained from 200 independent replications for each $N$. We find that the standard sampling fails to converge, while the asymptotic and coherence-optimal samplings lead to converged solution as $N$ is increased. {\color{black}We note that, in all cases, the standard sampling resulted in rank-deficient matrices $\bm\Psi$, and the reported results were generated using MATLAB's {\tt lsqr} function with a large number of iterations.}

\begin{figure}[h!]
\centering
\includegraphics[trim = 0mm 0mm 0mm 0mm, clip, scale=0.6]{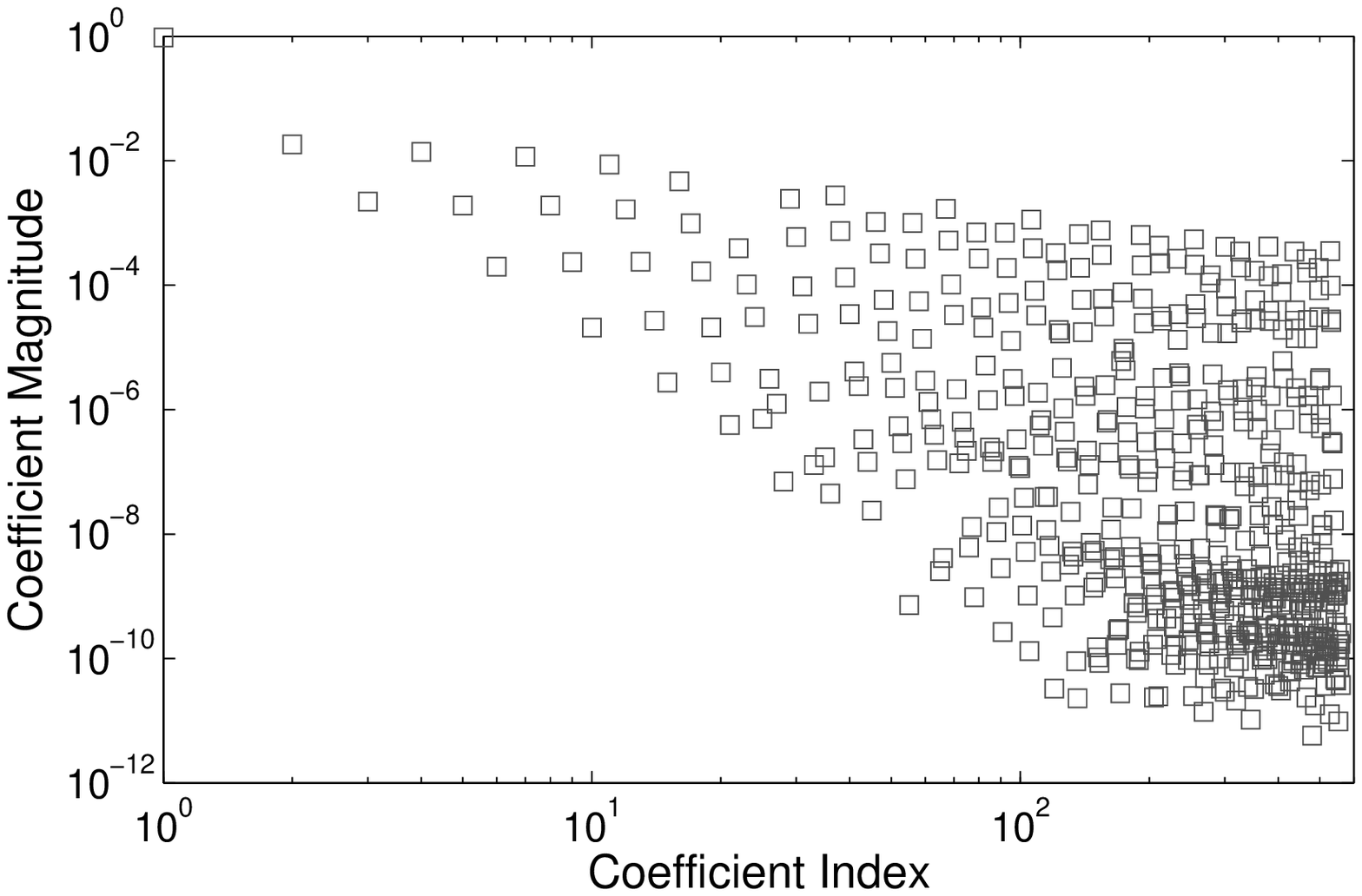}
\caption{PC coefficients of reference solution for the QoI $\rho_c:=\rho(t=4,\Xi_1,\Xi_2)$ in the surface reaction model.}
\label{Fig:RefSolODE}
\end{figure}

\begin{figure}[h!]
\hspace{-.5cm}
\begin{minipage}{.5\textwidth}
\centering
\includegraphics[scale=0.35]{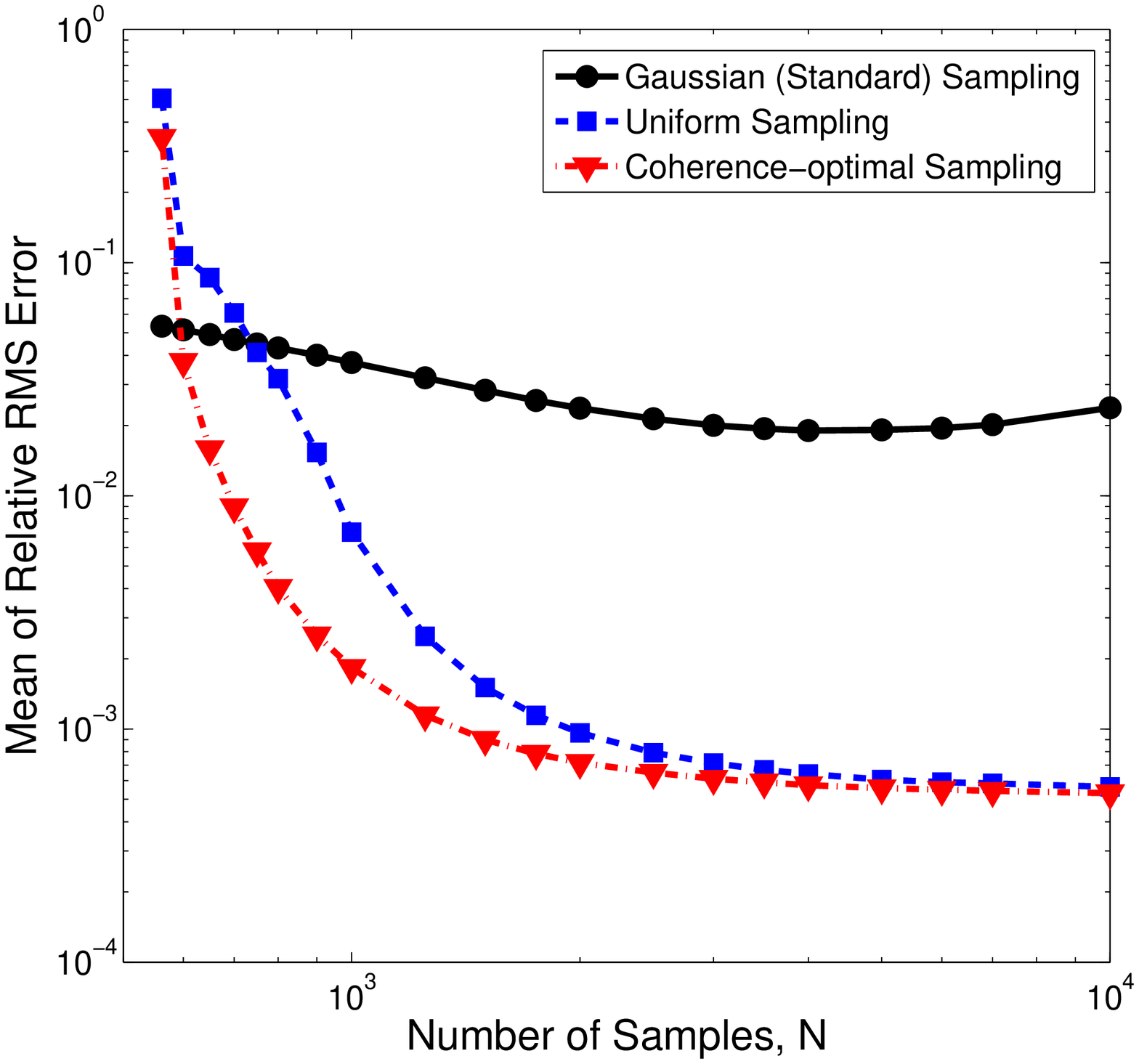}
\end{minipage}
\hspace{.1cm}
\begin{minipage}{.5\textwidth}
\centering
\includegraphics[scale=0.35]{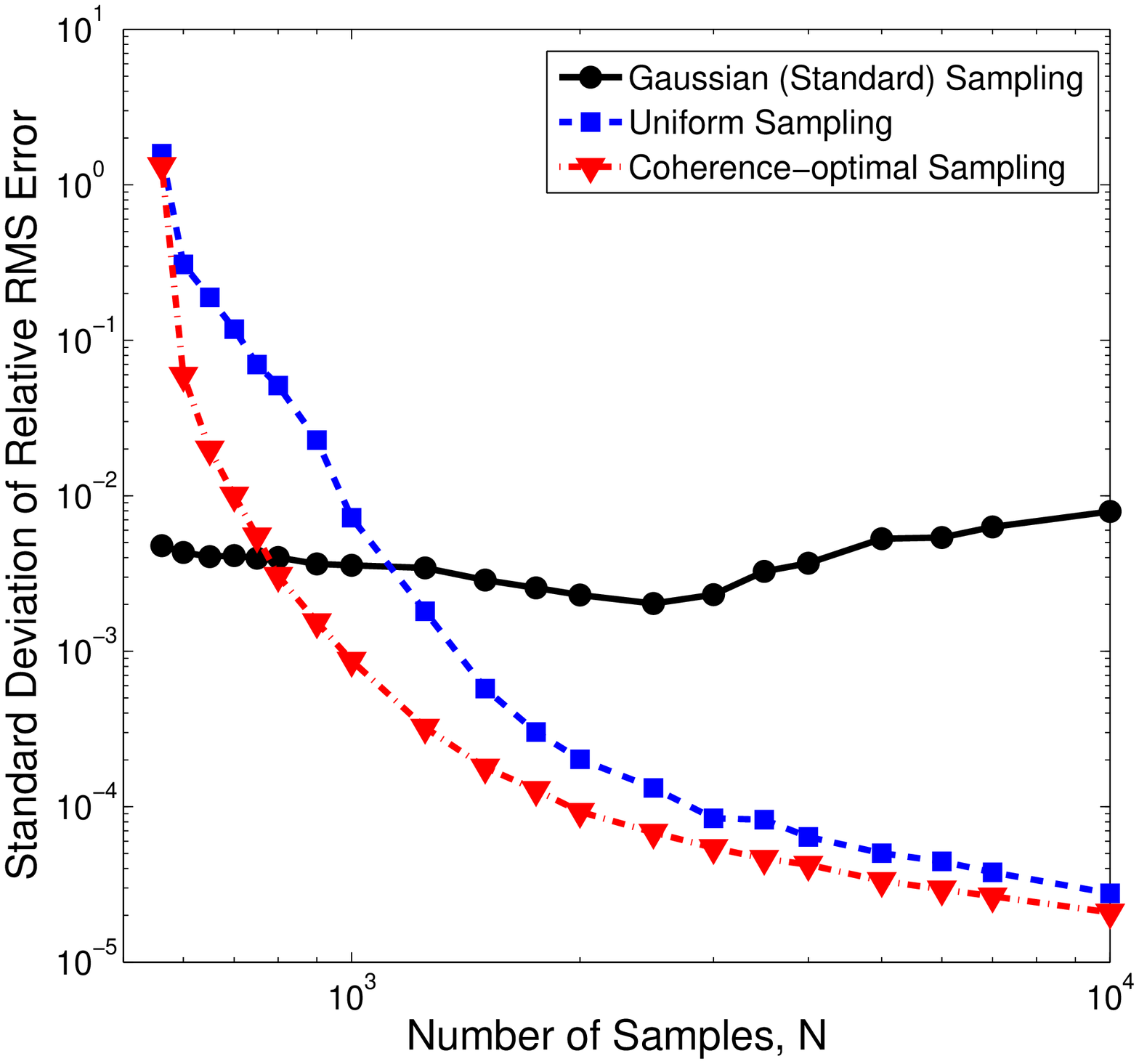}
\end{minipage}
\caption{Moments of root-mean-squared error between the reference and $\ell_2$-minimization solutions for the various sampling methods as a function of the number of samples.}
\label{Fig:ODERMSE}
\end{figure}

\subsection{Thermally Driven Cavity Flow}
\label{subsec:Heat}

Following \cite{LeMaitre02b,LeMaitre10,LeQuere91,Peng14}, we next consider a $2$-D heat driven square cavity flow problem with uncertain wall temperature, as shown in Fig. \ref{fig:cavity}. The left vertical wall has a deterministic, constant temperature $\tilde{T}_h$, referred to as the hot wall, while the right vertical wall has a stochastic temperature $\tilde T_c<\tilde T_h$ with constant mean $\bar{\tilde T}_c$, referred to as the cold wall. Both the top and bottom walls are assumed to be adiabatic. The reference temperature and the reference temperature difference are defined as $\tilde T_{ref}=(\tilde{T}_h+\bar{\tilde T}_c)/2$ and $\Delta \tilde T_{ref}=\tilde{T}_h-\bar{\tilde T}_c$, respectively. Under the assumption of small temperature differences, i.e., Boussinesq approximation, the governing equations in dimensionless variables are given by
\begin{equation}
\begin{aligned}
&\frac{\partial \bm{u}}{\partial t} + \bm{u}\cdot\nabla\bm{u}=-\nabla p + \frac{\text{Pr}}{\sqrt{\text{Ra}}}\nabla^2\bm{u}+\text{Pr}T\bm{\hat{y}},\\
& \nabla\cdot\bm{u}=0,\\ \label{eqn:cavity}
&\frac{\partial T}{\partial t}+ \nabla\cdot(\bm{u}T)=\frac{1}{\sqrt{\text{Ra}}}\nabla^2T,
\end{aligned}
\end{equation}
where $\bm{\hat{y}}$ is the unit vector $(0,1)$, $\bm{u}=(u,v)$ is velocity vector field, $T=(\tilde{T}-\tilde T_{ref})/\Delta \tilde T_{ref}$ is normalized temperature ($\tilde{T}$ denotes non-dimensional temperature), $p$ is pressure, and $t$ is time. Non-dimensional Prandtl and Rayleigh numbers are defined, respectively, as $\text{Pr}={\tilde \mu}\tilde c_p/\tilde \kappa$ and $\text{Ra}={\tilde \rho}{g}\beta\Delta \tilde T_{ref}{\tilde L}^3/({\tilde \mu}{\tilde \kappa})$, where the superscript tilde ($\tilde{~}$) denotes the non-dimensional quantities. Specifically, ${\tilde \mu}$ is molecular viscosity, ${\tilde \kappa}$ is thermal diffusivity, ${\tilde \rho}$ is density, ${g}$ is gravitational acceleration, the coefficient of thermal expansion is given by $\beta$, and ${\tilde L}$ is the reference length. In this example, the Prandtl and Rayleigh numbers are set to $\text{Pr}=0.71$ and $\text{Ra}=10^6$, respectively. For more details on the non-dimensional variables in (\ref{eqn:cavity}), we refer the 
reader to \cite{LeQuere91,LeMaitre02b,LeMaitre10}. 
On the cold wall, we use a (normalized) temperature distribution with stochastic fluctuations of the form
\begin{equation}
\begin{aligned}
T_c(x=1,y,\bm\Xi)=\bar{T}_c + T_c',\\
T_c'=\sigma_T\sum_{i=1}^{d}\sqrt{\lambda_i}\varphi_i(y)\Xi_i,
\end{aligned}
\label{eqn:coldwall}
\end{equation}
where $\bar{T}_c$ is a constant mean temperature. In (\ref{eqn:coldwall}), $\Xi_i$, $i=1,\dots,d$, are independent random variables uniformly distributed on $[-1,1]$. $\{\lambda_i\}_{i=1}^d$ and $\{\varphi_i(y)\}_{i=1}^d$ are the $d$ largest eigenvalues and the corresponding eigenfunctions of the exponential covariance kernel
%
$C_{T_cT_c}(y_1,y_2) = \exp{\left(-\frac{\vert y_1-y_2\vert}{l_c}\right)}$,
%
where $l_c$ is the correlation length. In this setting, a (semi-)analytic representation of the eigenpairs $(\lambda_i,\varphi_i(y))$ in~(\ref{eqn:coldwall}) is available, see, e.g.,~\cite{Ghanem03}.

In our numerical tests, we let $(T_h,\bar T_c)=(0.5,-0.5)$, $d=20$, $l_c=1/21$, and $\sigma_T=11/100$. Our QoI, the vertical velocity component at $(x,y)=(0.25,0.25)$ denoted by $v(0.25,0.25)$, is expanded in the Legendre PC basis of total degree $p=4$ with only the first $P=2500$ basis functions retained. We seek to accurately reconstruct $v(0.25,0.25)$ with $N< P$ random samples of $\bm{\Xi}$ and the corresponding realizations of $v(0.25,0.25)$. These samples are constructed from a least squares reference solution constructed from these 2500 basis polynomials as computed from a large number of Monte Carlo samples.

\begin{figure}[ht]
\centering
\includegraphics[height=0.35\textheight]{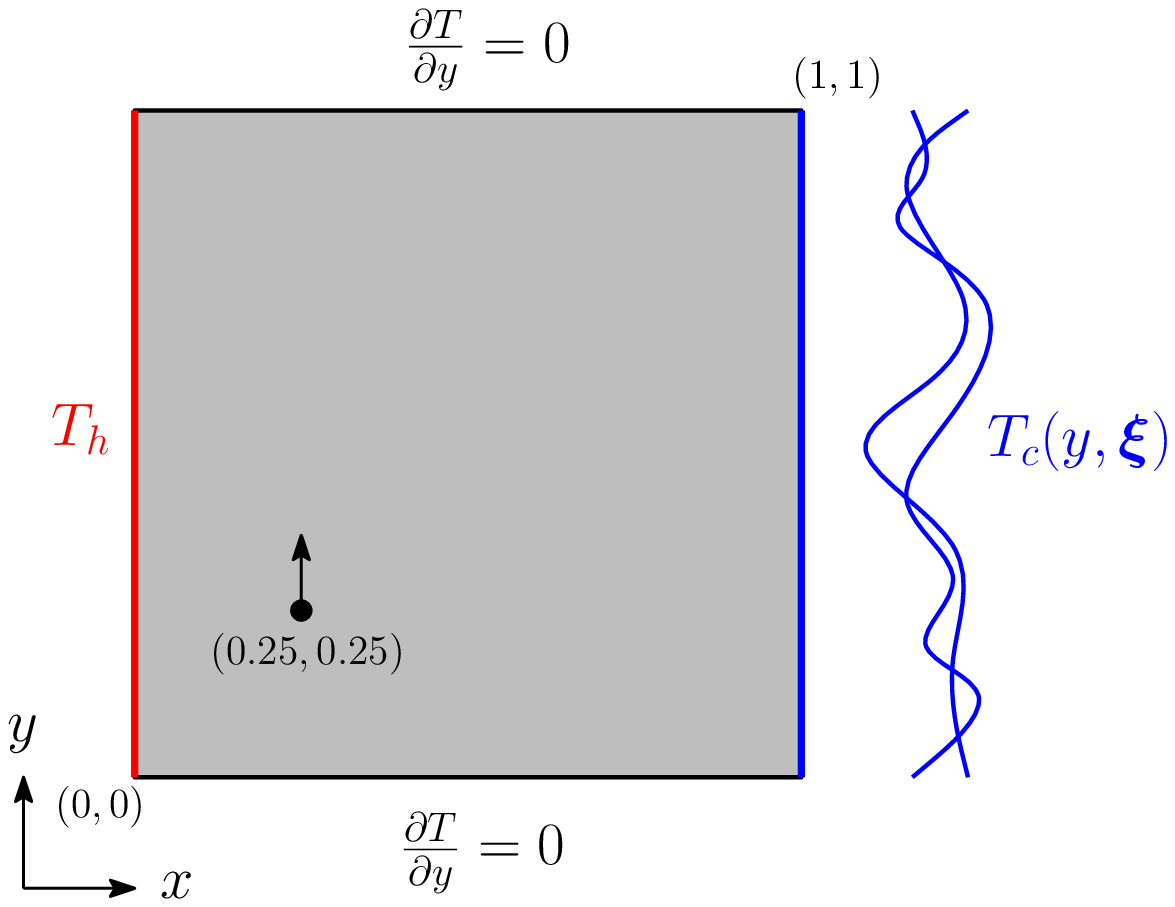}
\caption{Illustration of the cavity flow problem.}
\label{fig:cavity}
\end{figure}

To identify $u(\bm{\Xi})$ as a function of the random inputs $\bm{\Xi}$, we use a Legendre PC expansion of total order $p=3$, which for this $d=20$ stochastic dimensional problem yields $P=1771$ basis functions. We note that the root-mean-squared error is considered here as the primary measure of recovery, and that this leaves a truncation error from the 729 basis functions not used in the recovery.

We investigate the ability to recover $u(\bm{\Xi})$ via (\ref{eqn:lsqr}), using each of the three sampling schemes considered for Legendre polynomials. For this problem we further improve the quality of the MCMC sampling through an initial burn-in of 1,000 discarded samples~\cite{MCMCBook}. We provide bootstrapped estimates of the various moment based measures from a pool of samples generated beforehand. Specifically, samples for each realization are drawn from a pool of 50,000 previously generated samples, which are used to calculate bootstrap estimates of averages and standard deviations.

In Figure~\ref{Fig:EllipticSD}, we see plots of computed moments for the distribution of the relative root-mean-squared error between the computed and reference solutions obtained from 100 independent replications for each sample size, $N$. We note the standard and coherence-optimal sampling offer significant improvements over asymptotic, i.e., Chebyshev, sampling using similar sample sizes, $N$, both in terms of accuracy and robustness to differing realized samples. These observations are compatible with the theoretical results of Section \ref{sec:sampling} demonstrating a smaller coherence for the uniform sampling -- as compared to Chebyshev sampling -- for the case of $d>p$. The coherence-optimal sampling by construction leads to the theoretically smallest coherence, and provides a useful comparison for the other sampling schemes.

\begin{figure}[h!]
\hspace{-.55cm}
\begin{minipage}{.5\textwidth}
\centering
\includegraphics[scale=0.35]{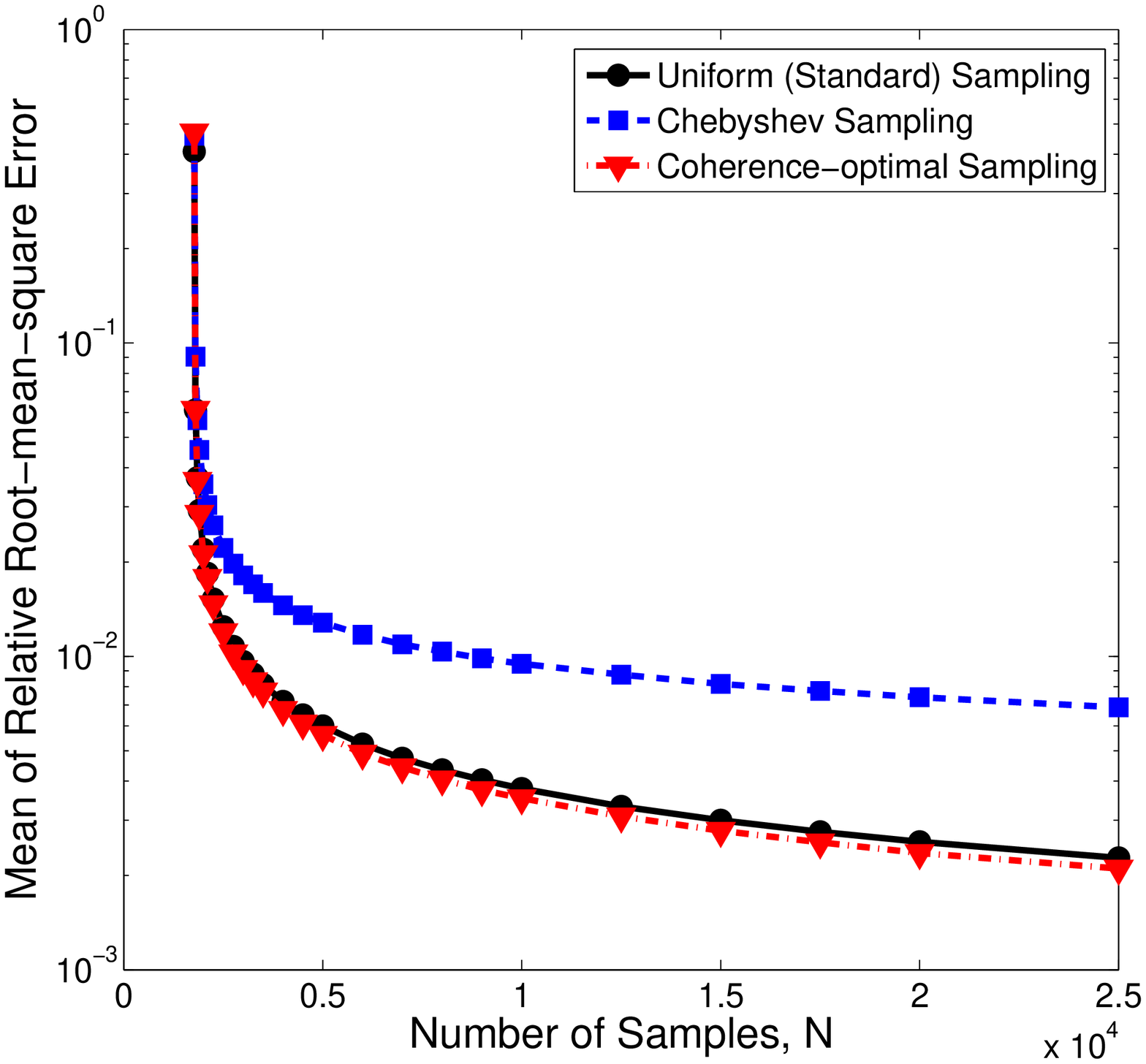}
\end{minipage}
\hspace{.1cm}
\begin{minipage}{.5\textwidth}
\centering
\includegraphics[scale=0.35]{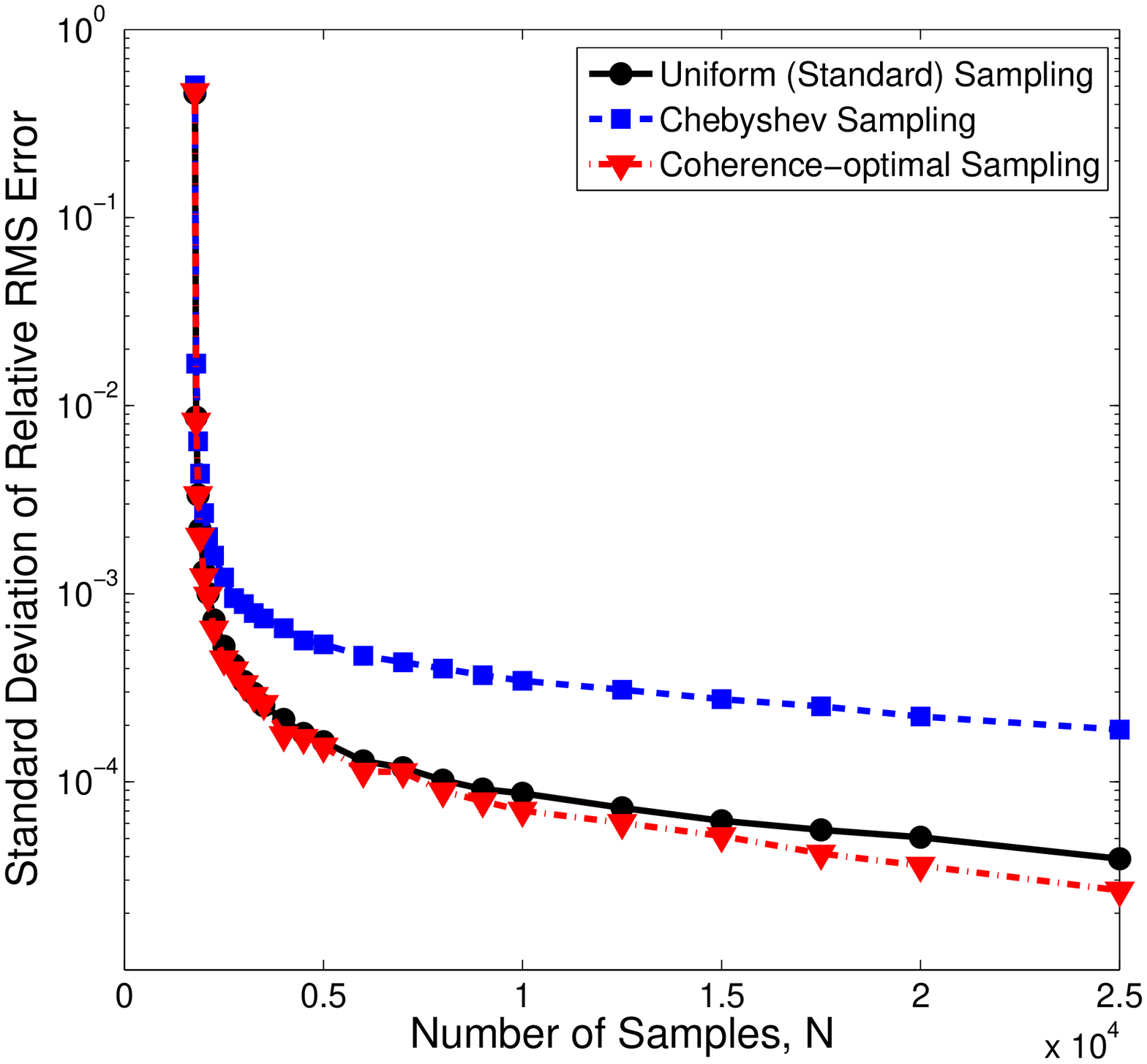}
\end{minipage}
\caption{Plots for the moments of root-mean-squared error for independent residuals for the various sampling methods as a function of the number of samples.}
\label{Fig:EllipticSD}
\end{figure}
\section{\texorpdfstring{Proofs of Theorems in Section~\ref{sec:motivation}}{Proofs of Theorems in Section~\ref{sec:motivation}}}
\label{sec:Proofs}

We now present proofs for the theorems in Section~\ref{sec:motivation}, where we use the truncation of $\Omega$ to $\mathcal{S}$, and believe it is clear how to alter proofs to analogous results when this truncation is not made. We note that the recovery from the least squares problem (\ref{eqn:lsqr}) is connected with the spectral convergence of
\begin{align*}
\bm{M}:=\frac{1}{N}\left(\bm{W\Psi}\right)^T\bm{W\Psi}
\end{align*}
to the identity matrix, see, e.g.,~\cite{CDL13} for $\bm{W}=\bm{I}$. To insure stability, we wish to guarantee that the largest singular value of $\bm{M}-\bm{I}$, denoted by $\sigma$, is in the interval $[0,1/2]$ with high probability. Indeed, we will consider any $\bm{M}$ that leads to $\sigma>1/2$ as unstable, and consider the entire sampling a failure. As a result, we first wish to demonstrate that this failure probability is low, noting the immediate applicability from Theorem 1 of~\cite{CDL13} when (\ref{Eqn:CoherenceBounded}) is used. Throughout we use the restricted expectation notation as defined in (\ref{eqn:restricted_expectation}), which is more convenient than the closely related conditional expectation. The following theorems also depend on a sampling event which we denote $\mathcal{E}$ that is the intersection of two events. 

The first is that all $N$ samples of $\bm{Y}$ would fall within $\mathcal{S}^c$, and consider any such realizations to lead to a failure for the entire sample. We may thus bound $\mu_{2}$ only over a set $\mathcal{S}$. Using the definition in (\ref{Eqn:CoherenceEll2Unbounded}), we can guarantee that each realization only fails with probability at most $1/(NP)$, and so via the union bound, this failure probability is no larger than $1/P$, which is the value that is seen in our proofs. While this term is important when considering e.g. standard sampling of Hermite polynomials where there is a probability that $\bm{\Xi}\in\mathcal{S}^c$, practically, the sampling distribution for $\bm{Y}$ can be supported only $\mathcal{S}$, such as via a truncated normal distribution, and this term may be dropped. We note that while we use $1/P$, tight bounds or exact computations of this failure probability are possible, which will typically make this term far smaller.

The second event which we will require for $\mathcal{E}$ is that the matrix $\bm{M}$ is close enough to identity so that the spectral norm $\|\bm{M}-\bm{I}\|<1/2$, an event which we will investigate more thoroughly.

We will use the condition,
\begin{align*}
\sum_{k=1}^{P}\mathbb{E}\left[|w(\bm{Y})\psi_k(\bm{Y})|^2\bm{1}_{\mathcal{S}^{c}}\right]\le\frac{1}{20\sqrt{P}},
\end{align*}
to insure that $\mathcal{S}^c$ is not important in the $\ell_2$ and, as we will now investigate, a spectral sense. The following lemma relates the convergence of $\bm{M}$ to $\bm{I}$ to a bound on the $\ell_2$-norm of the random rows. It is related to Lemma 1 of~\cite{CDL13}, and Section B of~\cite{CandesPlan}.
\begin{lem}
 \label{lem:SpectrumChernoffBound}
Let 
\begin{align*}
b&:=\sum_{k=1}^{P}\mathbb{E}\left[|w(\bm{Y})\psi_k(\bm{Y})|^2\bm{1}_{\mathcal{S}^{c}}\right]
\end{align*}
denote the bias in $\ell_2$-variation that is lost by restricting our sampled set to $\mathcal{S}$. Stated another way, if we let
\begin{align*} 
(\bm{w\psi})_i&:= \left(w(\bm{Y}_i)\psi_1(\bm{Y}_i),\cdots w(\bm{Y}_i)\psi_P(\bm{Y}_i)\right)^T,
\end{align*}
then
\begin{align*}
\mathbb{E}\left((\bm{w\psi})_i^T(\bm{w\psi})_i;\mathcal{S}\right) &= P-b.
\end{align*}
If $\|\cdot\|$ denotes the spectral norm of the matrix, and $N\sqrt{P}\ge 20$, then
\begin{align*}
 \mathbb{P}\Bigg(\|\bm{M}-\bm{I}\|\le 1/2\Bigg| \bm{Y}_i\in\mathcal{S}\ \forall i\Bigg)&\le 2P\exp(-0.1N\mu_{2}^{-1}).
\end{align*}
\end{lem}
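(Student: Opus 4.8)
The plan is to write $\bm{M}=\frac{1}{N}(\bm{W\Psi})^T\bm{W\Psi}=\frac{1}{N}\sum_{i=1}^{N}(\bm{w\psi})_i(\bm{w\psi})_i^T$ as a normalized sum of $N$ independent, positive-semidefinite, rank-one matrices---each $(\bm{w\psi})_i$ drawn under $f_{\bm Y}$ conditioned on $\bm{Y}_i\in\mathcal{S}$---and to establish the displayed tail bound $2P\exp(-0.1N\mu_2^{-1})$ for the spectral-norm event in the statement through a matrix Chernoff inequality, exactly as in Lemma 1 of~\cite{CDL13} and Section B of~\cite{CandesPlan}. First I would verify the stated expectation identity: since $(\bm{w\psi})_i^T(\bm{w\psi})_i=\sum_{k=1}^{P}|w(\bm{Y}_i)\psi_k(\bm{Y}_i)|^2$, the restricted expectation splits termwise as $\mathbb{E}(|w\psi_k|^2)-\mathbb{E}(|w\psi_k|^2\bm{1}_{\mathcal{S}^c})$, and the (approximate) orthonormality $\mathbb{E}(|w\psi_k|^2)=1$ coming from (\ref{Eqn:TransformIntegral}) collapses the sum to $P-b$. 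The same splitting applied to the full outer product shows that $\mathbb{E}[(\bm{w\psi})_i(\bm{w\psi})_i^T\mid\bm{Y}_i\in\mathcal{S}]$ has trace $P-b$ and, using the off-diagonal residuals $\epsilon_{i,j}$ of (\ref{Eqn:TransformIntegral}), sits spectrally within a controllable distance of $\bm{I}$.

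The key ingredient is the operator-norm control of each summand. Conditioned on $\bm{Y}_i\in\mathcal{S}$, the matrix $(\bm{w\psi})_i(\bm{w\psi})_i^T$ is positive semidefinite with norm $\|(\bm{w\psi})_i\|_2^2=\sum_{k=1}^{P}|w(\bm{Y}_i)\psi_k(\bm{Y}_i)|^2\le\mu_2$, which is precisely the role of the coherence parameter in (\ref{Eqn:CoherenceEll2Unbounded}) (or (\ref{Eqn:CoherenceBounded}) in the bounded case). Feeding this uniform bound and the near-identity conditional mean into the matrix Chernoff bound for sums of independent bounded PSD matrices produces, for spectral deviation $\delta$, a tail of the form $P\exp(-cN\delta^2\mu_2^{-1})$; applying it to both the largest and the smallest eigenvalue of $N\bm{M}$ supplies the dimensional prefactor $2P$, and evaluating the Chernoff exponent $\log\!\big(e^{\delta}/(1+\delta)^{1+\delta}\big)$ at the threshold $\delta=1/2$ yields the constant $0.1$, reproducing exactly the right-hand side $2P\exp(-0.1N\mu_2^{-1})$ of the displayed bound.

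The main obstacle is reconciling the concentration---which is naturally stated about the conditional mean $\mathbb{E}[\bm{M}\mid\bm{Y}_i\in\mathcal{S}\ \forall i]$---with the threshold phrased relative to $\bm{I}$, so that the deterministic defect between $\mathbb{E}[\bm{M}\mid\mathcal{S}]$ and $\bm{I}$ is shown negligible at the scale $1/2$. Here I would deploy the structural constraints built into $\mathcal{S}$ by (\ref{Eqn:CoherenceEll2Unbounded}): the restricted-mass bound $b\le\frac{1}{20\sqrt{P}}$ absorbs the trace and diagonal defect, the residuals $\epsilon_{i,j}$ absorb the off-diagonals, and the hypothesis $N\sqrt{P}\ge 20$ together with $\mathbb{P}(\mathcal{S}^c)\le 1/(NP)$ bounds the renormalization factor $1/\mathbb{P}(\mathcal{S})^N$ introduced by conditioning every row onto $\mathcal{S}$. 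The delicate bookkeeping is ensuring these several small corrections combine without eroding the clean exponent constant $0.1$; if a looser but more transparent constant is tolerable, one may substitute $\mu_2\le P\mu_\infty$ from (\ref{Eqn:CohBound}) to replace the sharp $\ell_2$ coherence with the more easily estimated $\ell_\infty$ coherence throughout.
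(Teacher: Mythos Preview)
Your approach is essentially the paper's: write $\bm{M}$ as a normalized sum of i.i.d.\ rank-one PSD matrices, bound each summand in operator norm by $\mu_2$ via (\ref{Eqn:CoherenceEll2Unbounded}), apply the matrix Chernoff bound of \cite{Tropp12a,CDL13} to both eigenvalue tails, and absorb the bias between the conditional mean and $\bm{I}$ using the constraints on $\mathcal{S}$ together with $N\sqrt{P}\ge 20$.

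Two small execution differences are worth noting. First, the paper does not separate diagonal and off-diagonal defects via the $\epsilon_{i,j}$ of (\ref{Eqn:TransformIntegral}); instead it bounds the full spectral defect $\|\mathbb{E}[(\bm{w\psi})(\bm{w\psi})^T\mid\mathcal{S}]-\bm{I}\|$ in one stroke, writing $\bm{I}$ as the $\mathcal{S}/\mathcal{S}^c$ decomposition of the unconditional expectation and using Jensen to dominate $\|\mathbb{E}[(\bm{w\psi})(\bm{w\psi})^T\mid\mathcal{S}^c]\|$ by $\mathbb{E}[\|\bm{w\psi}\|_2^2\mid\mathcal{S}^c]$, which feeds directly into the second condition of (\ref{Eqn:CoherenceEll2Unbounded}). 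The renormalization factor that enters here is $1/\mathbb{P}(\mathcal{S})$, not $1/\mathbb{P}(\mathcal{S})^N$; by independence the conditional mean of $\bm{M}$ equals the single-sample conditional outer product, so only a single factor appears. Second, because this bias $\epsilon:=1/(10\sqrt{P})$ shifts the conditional mean eigenvalues to $[1-\epsilon,1+\epsilon]$, the Chernoff threshold is not $\delta=1/2$ but the slightly smaller $\delta_\star=(1-2\epsilon)/(2+2\epsilon)$; the paper then checks that the resulting exponent $c_\epsilon(1-\epsilon)$ still exceeds $0.1$ (in fact $\approx 0.114$ already at $P=1$), which is exactly the ``delicate bookkeeping'' you flag.
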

\begin{proof}
 We have that
\begin{align*}
\bm{I} &=\mathbb{E}\left((\bm{w\psi})(\bm{w\psi})^T|\mathcal{S}\right)\mathbb{P}(\mathcal{S})+\mathbb{E}\left((\bm{w\psi})(\bm{w\psi})^T|\mathcal{S}^c\right)\mathbb{P}(\mathcal{S}^c).
\end{align*}
Letting $\|\cdot\|$ denote the spectral norm of the matrix. A brief calculation gives that
\begin{align*}
\left\|\mathbb{E}\left((\bm{w\psi})(\bm{w\psi})^T|\mathcal{S}\right)-\bm{I}\right\|&\le\frac{\mathbb{P}(\mathcal{S}^c)}{\mathbb{P}(\mathcal{S})}\left(\left\|\mathbb{E}\left((\bm{w\psi})(\bm{w\psi})^T\bigg|\mathcal{S}^c\right)\right\| + 1\right).
\end{align*}
Using Jensen's inequality we have that,
\begin{align*}
\left\|\mathbb{E}\left((\bm{w\psi})(\bm{w\psi})^T|\mathcal{S}\right)-\bm{I}\right\|&\le\frac{\mathbb{P}(\mathcal{S}^c)}{\mathbb{P}(\mathcal{S})}\left[\mathbb{E}\left(\|\bm{w\psi}\|^2\bigg|\mathcal{S}^c\right) + 1\right].
\end{align*}
From the conditions in (\ref{Eqn:CoherenceEll2Unbounded}), this can be bounded by
\begin{align*}
\left\|\mathbb{E}\left((\bm{w\psi})(\bm{w\psi})^T|\mathcal{S}\right)-\bm{I}\right\|&\le\frac{1}{1-(NP)^{-1}}\left[\frac{1}{20\sqrt{P}} + \frac{1}{NP}\right].
\end{align*}
If $N\sqrt{P}\ge 20$, then this is bounded by
\begin{align*}
\left\|\mathbb{E}\left((\bm{w\psi})(\bm{w\psi})^T|\mathcal{S}\right)-\bm{I}\right\|&\le\frac{1}{10\sqrt{P}}=:\epsilon,
\end{align*}
which should not be confused with a truncation error that also utilizes $\epsilon$. 
We note that for this proof $\epsilon$ is not related to truncation error, but rather the bias in spectral convergence as a result of truncation of $\Omega$ to $\mathcal{S}$ in (\ref{Eqn:CoherenceEll2Unbounded}). This insures that if our samples come from $\mathcal{S}$, then the mean sample is nearly orthonormal, that is there is not much bias from this truncation. We utilize a Chernoff bound to show that there is a fast convergence to this mean.
We have that
\begin{align*} 
1-\epsilon\le\lambda_{\min}\left(\mathbb{E}(\bm{M})\right)\le\lambda_{\max}\left(\mathbb{E}(\bm{M})\right)\le1+\epsilon.
\end{align*}
An application of the Chernoff bound as in Theorem 1.1 of \cite{Tropp12a} and Theorem 1 of \cite{CDL13} gives that for $\delta \in[0,1]$
\begin{align*}
 \mathbb{P}\Bigg(\lambda_{\min}\left(\bm{M}\right)\le (1-\delta)(1-\epsilon)\Bigg| \bm{Y}_i\in\mathcal{S}\ \forall i\Bigg)&\le P\left(\frac{e^{-\delta}}{(1-\delta)^{1-\delta}}\right)^{N(1-\epsilon)\mu_2^{-1}};\\ 
\mathbb{P}\Bigg(\lambda_{\max}\left(\bm{M}\right)\ge (1+\delta)(1+\epsilon)\Bigg| \bm{Y}_i\in\mathcal{S}\ \forall i\Bigg)&\le P\left(\frac{e^{\delta}}{(1+\delta)^{1+\delta}}\right)^{N(1+\epsilon)\mu_2^{-1}}.  
\end{align*}
Note that
\begin{align*}
 (1-\delta)(1-\epsilon)\le1/2 &\implies \delta \ge \frac{1-2\epsilon}{2-2\epsilon};\\
 (1+\delta)(1+\epsilon)\ge 3/2 &\implies \delta \ge \frac{1-2\epsilon}{2+2\epsilon}. 
\end{align*}
and so we have that a critical $\delta$, denoted $\delta_{\star}$, such that for all $\delta<\delta_{\star}$ the matrix $\bm{M}$ is guaranteed to satisfy $\|\bm{M}-\bm{I}\|_2\le1/2$ is $\delta_\star:=(1-2\epsilon)/(2+2\epsilon)$ as it is the smaller tolerance. Note that for $0\le\delta<1$,~\cite{CDL13},
\begin{align*}
 \frac{e^{-\delta}}{(1-\delta)^{1-\delta}}\ge\frac{e^{\delta}}{(1+\delta)^{1+\delta}},
\end{align*}
and so we may bound the sum of the probabilities by
\begin{align*}
  \mathbb{P}\Bigg(\|\bm{M}-\bm{I}\|\le \frac{1}{2}\Bigg| \bm{Y}_i\in\mathcal{S}\ \forall i\Bigg)\le 2P\left(\frac{e^{-\delta_\star}}{(1-\delta_\star)^{1-\delta_\star}}\right)^{N(1-\epsilon)\mu_2^{-1}}.
\end{align*}
We now bound this probability. We note that if $\epsilon\le1/2$ then
\begin{align*}
\delta_\star\le\frac{1-2\epsilon}{2+2\epsilon}\le \frac{1}{2}-\epsilon.
\end{align*}
To create a useful bound in terms of $\epsilon$, we note that
\begin{align*}
c_{\epsilon}&:=1/2-\epsilon+ (1/2+\epsilon)\log(1/2+\epsilon);\\
 \frac{e^{-\delta_\star}}{(1-\delta_\star)^{1-\delta_\star}}&\le\exp(-c_{\epsilon}).
\end{align*}
Thus, noting that if $P=1$, then $\epsilon = 1/20$ and $c_\epsilon\approx 0.1212$,
\begin{align*}
\mathbb{P}\Bigg(\|\bm{M}-\bm{I}\|\le \frac{1}{2}\Bigg| \bm{Y}_i\in\mathcal{S}\ \forall i\Bigg)&\le 2P\exp\left(-c_{\epsilon}(1-\epsilon)N\mu_{2}^{-1}\right);\\
&\le 2P\exp\left(-0.95c_{\epsilon}N\mu_{2}^{-1}\right);\\
&\le 2P\exp(\left(-0.114N\mu_{2}^{-1}\right).
\end{align*}
As $P\rightarrow\infty$, $c_\epsilon\rightarrow 0.1534$, and so to does the multiplicative constant which is seen to not heavily depend on $P$.
\end{proof}
This Lemma shows that with a probability that converges to 1 exponentially in $N/\mu_{2}$, $\bm M$ is well-conditioned. Changes could allow us to lower the bound on $\|\bm{M}-\bm{I}\|$ from $1/2$ at the cost of a higher failure probability. This reduction on $\|\bm{M}-\bm{I}\|$ will be useful in a way that we see in the next proof. Also we could alter the conditions of (\ref{Eqn:CoherenceEll2Unbounded}) for $\mathcal{S}$ to affect this probability. We consider neither point here, but we refer the interested reader to Theorem 1 of~\cite{CDL13} and Section B of~\cite{CandesPlan}.

We are now equipped to prove Theorem~\ref{thm:SampleDepth} and~\ref{thm:SampleDepthNoise}, but before doing so we first briefly show that the truncation error that is orthogonal to the basis functions restricted to the smaller set $\mathcal{S}$ is less than that over the full set $\Omega$. 

\begin{lem}
\label{lem:TruncLemma}
Let $\epsilon_\mathcal{S}(\bm{\xi})$ be the truncated error that is orthogonal to the basis functions restricted to $\mathcal{S}$, and let $\epsilon_\Omega(\bm{\xi})$ be the error that is orthogonal to the basis functions over the full $\Omega$. Then
\begin{align*}
\mathbb{E}(\epsilon_{\mathcal{S}}^2(\bm{\Xi});\mathcal{S})\le\mathbb{E}(\epsilon_{\Omega}^2(\bm{\Xi})).
\end{align*}
\end{lem}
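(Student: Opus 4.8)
The plan is to read both errors as best-approximation residuals and then use optimality plus monotonicity of the integral. Recall that $\epsilon_\Omega$ is the residual of the $L_2(\Omega,f)$-orthogonal projection of $u$ onto $\mathrm{span}\{\psi_k\}_{k=1}^P$, i.e.\ $\epsilon_\Omega = u-\sum_{k=1}^P c_k^\Omega\psi_k$ with $c_k^\Omega=\mathbb{E}[u\psi_k]$, so that $\int_\Omega \epsilon_\Omega\psi_k f\,d\bm\xi=0$ for all $k$. Likewise, "the truncation error orthogonal to the basis restricted to $\mathcal{S}$" means $\epsilon_\mathcal{S}=u-\sum_{k=1}^P c_k^\mathcal{S}\psi_k$ where $\bm c^\mathcal{S}$ satisfies the restricted normal equations $\int_\mathcal{S}\epsilon_\mathcal{S}\psi_k f\,d\bm\xi=0$; equivalently, $\bm c^\mathcal{S}$ minimizes $\int_\mathcal{S}(u-\sum_k c_k\psi_k)^2 f\,d\bm\xi$ over $\bm c\in\mathbb{R}^P$ (a minimizer exists by finite-dimensionality of the span). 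I would state these two definitions explicitly at the start of the proof.

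First I would invoke optimality of $\bm c^\mathcal{S}$ over $\mathcal{S}$, testing it against the suboptimal competitor $\bm c^\Omega$:
\begin{align*}
\mathbb{E}(\epsilon_\mathcal{S}^2(\bm\Xi);\mathcal{S}) &= \int_\mathcal{S}\Big(u-\textstyle\sum_{k=1}^P c_k^\mathcal{S}\psi_k\Big)^2 f\,d\bm\xi \\
&\le \int_\mathcal{S}\Big(u-\textstyle\sum_{k=1}^P c_k^\Omega\psi_k\Big)^2 f\,d\bm\xi = \mathbb{E}(\epsilon_\Omega^2(\bm\Xi);\mathcal{S}).
\end{align*}
Second, since the integrand $\epsilon_\Omega^2 f$ is nonnegative and $\mathcal{S}\subseteq\Omega$, enlarging the domain only increases the integral:
\begin{align*}
\mathbb{E}(\epsilon_\Omega^2(\bm\Xi);\mathcal{S}) = \int_\mathcal{S}\epsilon_\Omega^2 f\,d\bm\xi \le \int_\Omega\epsilon_\Omega^2 f\,d\bm\xi = \mathbb{E}(\epsilon_\Omega^2(\bm\Xi)).
\end{align*}
Chaining the two displays yields the claim.

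The argument is short and I do not anticipate a genuine obstacle; the only point needing care is pinning down the meaning of $\epsilon_\mathcal{S}$ so that the variational inequality is legitimate — namely that it is the residual of the $L_2(\mathcal{S},f)$ projection, which is exactly what "orthogonal to the basis functions restricted to $\mathcal{S}$" encodes. If the Gram matrix of $\{\psi_k\}$ over $\mathcal{S}$ is singular the minimizing $\bm c^\mathcal{S}$ is merely non-unique, but a minimizer still exists and the first displayed inequality is unaffected, so nothing breaks. I would therefore present the proof as the two one-line estimates above, preceded by the explicit definitions of $\epsilon_\Omega$ and $\epsilon_\mathcal{S}$.
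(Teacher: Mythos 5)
Your proof is correct and is essentially identical to the paper's: both arguments first use optimality of the $L_2(\mathcal{S},f)$ best approximation against the competitor $\hat{u}_\Omega$ restricted to $\mathcal{S}$, and then enlarge the domain of integration from $\mathcal{S}$ to $\Omega$ using nonnegativity of the integrand. Your added remarks on the normal equations and possible non-uniqueness of the minimizer are fine but not needed for the argument.
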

We use this inequality in the proofs of Theorem~\ref{thm:SampleDepth} and~\ref{thm:SampleDepthNoise}, as our errors are based on the truncated orthogonal version, and we would like to substitute for the larger, more global error. We note that this theorem applies only to the theoretically optimal solutions, and not any computed approximation.

\begin{proof}
Let $\mathcal{V}$ be the span of $\{\psi_1(\bm{\xi}),\cdots,\psi_P(\bm{\xi})\}$. We consider the least squares
 solution over $\mathcal{S}$, and denote the $L_2(\mathcal{S},f)$-optimal approximation to $u$ as 
\begin{align*}
 \hat{u}_\mathcal{S} :=\mathop{\arg\min}\limits_{u_\star\in\mathcal{V}}\int_{\mathcal{S}}|u(\bm{\xi})-u_\star(\bm{\xi})|^2f(\bm{\xi})d\bm{\xi}.
\end{align*}
Similarly, we let $\hat{u}_\Omega$ denote the least squares
 solution that minimizes the above over $\Omega$. It follows that
\begin{align*} 
\int_{\mathcal{S}}|u(\bm{\xi})-\hat{u}_\mathcal{S}(\bm{\xi})|^2f(\bm{\xi})d\bm{\xi}&\le\int_{\mathcal{S}}|u(\bm{\xi})-\hat{u}_\Omega(\bm{\xi})|^2f(\bm{\xi})d\bm{\xi},\\
&\le \int_{\Omega}|u(\bm{\xi})-\hat{u}_\Omega(\bm{\xi})|^2f(\bm{\xi})d\bm{\xi}.
\end{align*}
\end{proof}
\subsection{\texorpdfstring{Proof of Theorem~\ref{thm:SampleDepth}}{Proof of Theorem~\ref{thm:SampleDepth}}}
\label{subsec:proofEll2TruncOnly}

We recall that our error is conditioned on a set $\mathcal{E}$ which is the intersection of two events. The first event, $\mathcal{E}_1$, is that all sampled $\bm{Y}$ are in the set $\mathcal{S}$, which may be a proper subset of $\Omega$. The second event, $\mathcal{E}_2$, is that $\|\bm{M}-\bm{I}\|\le 1/2$. We note that the probability density function $f(\bm{\xi})$ for $\bm{\Xi}$ is related to that for $\bm{Y}$ multiplied by $w^2(\bm{\xi})$. We are premultiplying by $w(\bm{Y})$ because of sampling $\bm{Y}$ instead of $\bm{\Xi}$, and so we are able to write expectations of $w^2(\bm{Y})g(\bm{Y})$ in terms of $\bm{\Xi}$. Further, for any appropriately measurable function $g$ and appropriately measurable set $\mathcal{T}$,
\begin{align*}
\mathbb{E}(w^2(\bm{Y})g(\bm{Y});\mathcal{T})=\mathbb{E}(g(\bm{\Xi});\bm{\Xi}\in\mathcal{T}).
\end{align*}
As the event $\mathcal{E}$ includes that all samples are in $\mathcal{S}$, we thus have that
\begin{align*}
\mathbb{E}(w^2(\bm{Y})g(\bm{Y});\mathcal{E})=\mathbb{E}(g(\bm{\Xi});\mathcal{E}).
\end{align*}
The remainder of the proof borrows methodology from that of Theorem 2 of~\cite{CDL13}, while having several differences regarding the truncation of $\Omega$ to $\mathcal{S}$. Recalling that $\epsilon(\bm{\Xi})$ is the truncation error, we have that $w^2(\bm{Y})\epsilon(\bm{Y})$ is orthogonal to the space $\mathcal{V}=\mbox{span}\left\{\psi_1(\bm{\Xi}),\cdots,\psi_P(\bm{\Xi})\right\}$. We may define $\hat{\bm{a}}$ to be the difference between the true theoretical solution and our computed solution by
\begin{align*}
\hat{\bm{a}}:=\bm{c}-\hat{\bm{c}}
\end{align*}
and note that this implies that
\begin{align*}
\bm{M}\hat{\bm a}&=\bm{b},
\end{align*}
where, as the rows of $\bm{u}$ and $\bm{\Psi}$ are each multiplied by $w(\bm{Y}_i)$ and $\hat{\bm{c}}$ is a least-squares solution,
\begin{align*}
b_k&=\frac{1}{N}\mathop{\sum}\limits_{i=1}^Nw^2(\bm{Y}_i)\epsilon(\bm{Y}_i)\psi_k(\bm{Y}_i).
\end{align*}
As we assume the hypotheses of Lemma~\ref{lem:SpectrumChernoffBound}, we have that $\|\bm{M}^{-1}\|\le 2$, and thus
\begin{align*}
\|\hat{\bm a}\|_2^2\le 4\|\bm{b}\|_2^2.
\end{align*}
Applying the expectation, integral, and a triangular inequality gives that 
\begin{align*}
\mathbb{E}\left(\int_{\Omega}(u(\bm{\xi})-\hat{u}(\bm{\xi}))^2f(\bm{\xi})d\bm{\xi};\mathcal{E}\right)&\le \mathbb{E}(\epsilon(\bm{\Xi})^2)+4\mathbb{E}(\|\bm{b}\|_2^2;\mathcal{E}).
\end{align*}
We now work to bound $\mathbb{E}(\|\bm{b}\|_2^2;\mathcal{E})$, noting that this includes that all draws are restricted to come from $\mathcal{S}$. We have that
\begin{align*}
\mathbb{E}(b_k^2;\mathcal{E})&\le \frac{1}{N^2}\mathop{\sum}\limits_{i=1}^N\mathop{\sum}\limits_{j=1}^N\mathbb{E}(w^2(\bm{Y}_i)\epsilon(\bm{Y}_i)\psi_k(\bm{Y}_i)w^2(\bm{Y}_j)\epsilon(\bm{Y}_j)\psi_k(\bm{Y}_j));\mathcal{S}),\\
&\le\frac{1}{N^2}\mathop{\sum}\limits_{i=1}^N\mathbb{E}(w^4(\bm{Y}_i)\epsilon^2(\bm{Y}_i)\psi^2_k(\bm{Y}_{i});\mathcal{S}),\\
&\le\frac{1}{N}\mathbb{E}(w^2(\bm{Y})\epsilon^2(\bm{Y});\mathcal{S})\mathop{\sup}\limits_{\bm{\xi}\in\mathcal{S}}|w^2(\bm{\xi})\psi^2_k(\bm{\xi})|,
\end{align*}
where the first inequality comes from the definition of $b_k$; the second from the independence of the $\bm{Y}_k$ and orthogonality of $\{\psi_k\}_{k=1}^P\cup\epsilon$; and the third is a direct point-wise bound. Hence, summing for $k=1:P$ and recalling the definition in (\ref{Eqn:CoherenceEll2Unbounded}), we arrive at
\begin{align*}
\mathbb{E}(\|\bm{b}\|_2^2;\mathcal{E})&\le\frac{\mathbb{E}(\epsilon^2(\bm{\Xi}))}{N}\cdot\mathop{\sup}\limits_{\bm{\xi}\in\mathcal{S}}\mathop{\sum}\limits_{k=1}^P|w(\bm{\xi})\psi_k(\bm{\xi})|^2,\\
&\le\frac{\mathbb{E}(\epsilon^2(\bm{\Xi}))}{N}\cdot\mu_{2}.
\end{align*}
Putting the above pieces together, we have that
\begin{align*}
\mathbb{E}\left(\int_{\Omega}(u(\bm{\xi})-\hat{u}(\bm{\xi}))^2f(\bm{\xi})d\bm{\xi};\mathcal{E}\right)&\le \mathbb{E}(\epsilon^2(\bm{\Xi}))\left(1+\frac{4\mu_{2}}{N}\right).
\end{align*}
This completes the proof.

\subsection{\texorpdfstring{Proof of Theorem~\ref{thm:SampleDepthNoise}}{Proof of Theorem~\ref{thm:SampleDepthNoise}}}
\label{subsec:proofEll2FullError}
This proof is related to Theorem 3 of~\cite{CDL13}, although we bound the measurement error similarly to the bound on truncation error. We let $\epsilon_T$ denote the truncation error (denoted by $\epsilon$ in the previous theorem), with corresponding hat notation. Additionally we let $\epsilon_M(\bm{Y}):=\mathbb{E}(\epsilon_M(\bm{\chi};\bm{Y})|\bm{Y})$, be itself a random quantity that representing the contribution arising from the unobserved random vector, $\bm{\chi}$.  Similarly to before, we define $\hat{\bm{a}}_m$ to be the true solution, minus the computed solution as well as the contribution defined above that arises from error due to the truncation of the basis, leaving only a solution computation associated with measurement error. That is,
\begin{align*}
\hat{\bm{a}}_m:=\bm{c}-\hat{\bm{c}}-\hat{\bm{a}}
\end{align*}
is the portion of the solution which fits the measurement noise and satisfies
\begin{align*}
\bm{M}\hat{\bm{a}}_m=\bm{d}.
\end{align*}
Here, $\bm{d}$ is given by
\begin{align*}
d_k&=\frac{1}{N}\mathop{\sum}\limits_{i=1}^Nw^2(\bm{Y}_i)\epsilon_M(\bm{Y}_i)\psi_k(\bm{Y}_i).
\end{align*}
Applying the integral and expectation similarly to before we have that,
\begin{align}
\label{eqn:pluginforsum}
\mathbb{E}\left(\int_{\Omega}(u(\bm{\xi})-\hat{u}(\bm{\xi}))^2f(\bm{\xi})d\bm{\xi};\mathcal{E}\right)&\le \mathbb{E}(\epsilon^2(\bm{\Xi}))+4\cdot\left[2\mathbb{E}\left(\|\bm{b}\|_2^2;\mathcal{E}\right)+2\mathbb{E}\left(\|\bm{d}\|_2^2;\mathcal{E}\right)\right].
\end{align}
We bound $\mathbb{E}(d^2_k;\mathcal{E})$ similarly to how we bounded $\mathbb{E}(b^2_k;\mathcal{E})$. Particularly,
\begin{align*}
\mathbb{E}(d_k^2;\mathcal{E})&\le \frac{1}{N^2}\mathop{\sum}\limits_{i=1}^N\mathop{\sum}\limits_{j=1}^N\mathbb{E}(w^2(\bm{Y}_i)\epsilon_M(\bm{Y}_i)\psi_k(\bm{Y}_i)w^2(\bm{Y}_j)\epsilon_M(\bm{Y}_j));\mathcal{S}),\\
&\le\frac{1}{N^2}\mathop{\sum}\limits_{i=1}^N\mathbb{E}(w^4(\bm{Y}_i)\epsilon_M^2(\bm{Y}_i)\psi^2_k(\bm{Y}_{i});\mathcal{S}),\\
&\le\frac{1}{N}\mathbb{E}(w^2(\bm{Y})\epsilon_M^2(\bm{Y});\mathcal{S})\mathop{\sup}\limits_{\bm{\xi}\in\mathcal{S}}|w^2(\bm{\xi})\psi^2_k(\bm{\xi})|.
\end{align*}
Which gives that
\begin{align*}
\mathbb{E}(\|\bm{d}\|_2^2;\mathcal{E})&\le \frac{\mathbb{E}(\epsilon^2_M(\bm{\Xi}))\mu_{2}}{N}.
\end{align*}
Using this and the results from Theorem~\ref{thm:SampleDepth} plugged into (\ref{eqn:pluginforsum}), completes the proof.
 \subsection{\texorpdfstring{Proof of Theorem~\ref{thm:LowPTransformedSamples}}{Proof of Theorem~\ref{thm:LowPTransformedSamples}}}
 \label{subsec:proofEll2FullError}

While the first part of this follows from the proof of Theorem 4.5 of \cite{Hampton14}, we briefly prove the second statement, which arises from $B^2(\bm{\xi})$ being a linear combination of squared basis functions. We note that if we take $w(\bm{\xi})$ to be $1/B(\bm{\xi})$ then
 \begin{align*}
  \mu_2(\bm{Y})&:= \max_{\bm{\xi}\in\Omega}c^{-2}w^2(\bm{\xi})\mathop{\sum}\limits_{k=1}^P\psi^2_k(\bm{\xi}),\\
  &= \max_{\bm{\xi}\in\Omega}c^{-2} = c^{-2},
 \end{align*}
where it remains to identify the normalization constant $c^{-2}$ so that $f_{\bm{Y}}(\bm{\xi})= c^2f(\bm{\xi})/w^2(\bm{\xi})$ is a probability distribution. Recalling the definition of $w$ in this case we have that
 \begin{align*}
  1&=\int_{\bm{\xi}\in\Omega}f_{\bm{Y}}(\bm{\xi})d\bm{\xi},\\
  &= c^2\mathop{\sum}\limits_{k=1}^P\int_{\bm{\xi}\in\Omega}f(\bm{\xi})\psi^2_k(\bm{\xi})d\bm{\xi}.
  \end{align*}
 By the orthonormality of the $\{\psi_k(\bm\xi)\}_{k=1}^{P}$ it follows that $c^2=1/P$, implying that $\mu_2=c^{-2}=P$.

\section{\texorpdfstring{Conclusions}{Conclusions}}
\label{sec:conc}

We provided an analysis of Hermite and Legendre polynomials which allowed us to bound a coherence parameter and generate recovery guarantees for polynomial chaos expansions obtained via least squares regression. We also identified alternative random sampling schemes which provide sharper guarantees and demonstrate improved polynomial chaos reconstructions relative to the random sampling from the orthogonality measure of these bases. These sampling methods were derived based on the properties of Hermite and Legendre polynomials. Furthermore, we showed a Markov Chain Monte Carlo method for generating samples that minimizes the coherence parameter, thereby achieving an optimality for the number of random solution realizations. Such a sampling was referred to as coherence-optimal sampling, and guarantees recovery with a number of samples that scales linearly in the number of basis functions, up to log factors. Positive results were observed when computing the solution to a non-linear ordinary differential equation, where a high order Hermite polynomial chaos expansion was needed for an accurate solution approximation. Similarly, sampling methods were compared on arbitrary manufactured stochastic functions, and the different sampling strategies were tested for identifying the solution of a 20-dimensional heat driven fluid flow. In all examples, positive results were attained for the coherence-optimal sampling method.

\section*{Acknowledgements}
\label{sec:acknow}

We thank Ji Peng from CU Boulder for providing the deterministic codes for the problem in Section \ref{subsec:Heat}.

This material is based upon work supported by the U.S. Department of Energy Office of Science, Office of Advanced Scientific Computing Research, under Award Number DE-SC0006402, as well as National Science Foundation under grant DMS-1228359.

\bibliographystyle{elsarticle-num}
\bibliography{CoherencePolyCitations}
\end{document}